\newtheorem{thm}{Theorem}[section]
\newtheorem{mainthm}{Theorem}
\newtheorem{lem}[thm]{Lemma}
\newtheorem{cor}[thm]{Corollary}
\theoremstyle{definition}
\newtheorem{defn}[thm]{Definition}
\newtheorem{rem}[thm]{Remark}
\newtheorem*{ack}{Acknowledgments}
\newcommand{\F}{\mathbb{F}}
\newcommand{\Z}{\mathbb{Z}}
\newcommand{\R}{\mathbb{R}}
\newcommand{\C}{\mathbb{C}}
\newcommand{\sph}{\mathbb{S}}
\newcommand{\aut}{\textsc{Aut}}
\newcommand{\cay}{\textsc{Cay}}
\newcommand{\pure}{P}
\newcommand{\good}{$[\chi] \in \Sigma^1(\pure_n)$}
\newcommand{\onto}{\twoheadrightarrow}
\begin{document}

\title[$\Sigma^1(\pure_n)$]{The BNS-invariant for the pure braid groups}

\author[Koban]{Nic~Koban}
\address{Dept.~of Math., University of Maine Farmington, Farmington ME 04938}
\email{nicholas.koban@maine.edu}

\author[McCammond]{Jon~McCammond}
\address{Dept.~of Math., UC Santa Barbara, Santa Barbara CA 93106}
\email{jon.mccammond@math.ucsb.edu}

\author[Meier]{John~Meier}
\address{Dept.~of Math., Lafayette College, Easton PA 18042}
\email{meierj@lafayette.edu}

\date{\today}

\begin{abstract}
  In 1987 Bieri, Neumann and Strebel introduced a geometric invariant
  for discrete groups.  In this article we compute and explicitly
  describe the BNS-invariant for the pure braid groups.
\end{abstract}

\subjclass[2010]{20F65}
\keywords{BNS invariant, sigma invariants, pure braid groups}
\maketitle

In 1987 Robert Bieri, Walter Neumann, and Ralph Strebel introduced a
geometric invariant of a discrete group that is now known as its BNS
invariant \cite{BNS87}.  For finitely generated groups the invariant
is a subset of a sphere associated to the group called its character
sphere.  They proved that their invariant is an open subset of the
character sphere and that it determines which subgroups containing the
commutator subgroup are finitely generated.  In particular, the commutator
subgroup is itself finitely generated if and only if the BNS invariant
is the entire character sphere.  For fundamental groups of smooth
compact manifolds, the BNS-invariant contains information about the
existence of circle fibrations of the manifold and for fundamental
groups of $3$-dimensional manifolds, the BNS-invariant can be
described in terms of the Thurston norm.  Given these connections, it
is perhaps not surprising that the BNS-invariant is typically somewhat
difficult to compute.  It has been completely described for some 
infinite families of groups, including: one-relator groups \cite{Br87},
right-angled Artin groups \cite{MeVW95}, and the pure symmetric
automorphism groups of free groups \cite{Or00}.  In this article we
combine aspects of the proofs of these earlier results to compute and
explicitly describe the BNS-invariant for the pure braid groups.

\begin{mainthm}\label{thm:main}
  The BNS-invariant for the pure braid group $\pure_n$ is the
  complement of a union of  $\pure_3$-circles and the
  $\pure_4$-circles in its character sphere. There are exactly
  $\binom{n}{3} + \binom{n}{4}$ such circles.
\end{mainthm}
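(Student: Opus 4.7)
The plan is to prove the stated equality by establishing both inclusions: every one of the $\binom{n}{3} + \binom{n}{4}$ listed circles lies in $\Sigma^1(\pure_n)^c$, and conversely every character outside the union of these circles lies in $\Sigma^1(\pure_n)$.

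For the first inclusion, the strategy is to exploit the strand-forgetting surjections $\pi_S \colon \pure_n \onto \pure_{|S|}$ associated to each subset $S \subseteq \{1,\ldots,n\}$ of size three or four. The Fadell-Neuwirth fibrations guarantee that each such $\pi_S$ has finitely generated kernel, so by the standard functoriality of $\Sigma^1$ under surjections with finitely generated kernel, the pullback $\pi_S^{\ast}(\Sigma^1(\pure_{|S|})^c)$ sits inside $\Sigma^1(\pure_n)^c$. It then remains to locate an explicit bad circle inside each $\Sigma^1(\pure_k)^c$ for $k=3,4$. For $k=3$ this is immediate from the product decomposition $\pure_3 \cong F_2 \times \Z$ and the classical computation that $\Sigma^1(F_2 \times \Z)^c$ is the equator of characters vanishing on the central $\Z$ factor. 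For $k=4$ one identifies a single additional circle directly from a finite presentation of $\pure_4$, using either Brown's valuation criterion adapted to the pure braid relations or a hands-on analysis of the Cayley $2$-complex.

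The second inclusion is the more delicate direction. Given $[\chi]$ outside every listed circle, I would use the standard criterion that \good \ is equivalent to connectivity of the $\chi$-positive subgraph of the Cayley graph of $\pure_n$ on a convenient generating set, which I would take to be the set of generators $A_{ij}$. The argument combines techniques from Meier-Van Wyk's treatment of right-angled Artin groups and Orlandi-K\"orner's treatment of pure symmetric automorphism groups: encode $\chi$ as the system of weights $a_{ij} = \chi(A_{ij})$, and exploit the hypothesis that for no three-element (respectively four-element) $S \subseteq \{1,\ldots,n\}$ does the restricted weight tuple $(a_{ij})_{i,j \in S}$ lie on the $\pure_3$-circle (respectively $\pure_4$-circle). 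Under these hypotheses one constructs explicit witnessing paths connecting each generator to the identity inside the positive submonoid, performing local adjustments via the braid-type relations confined to three- and four-strand subgroups.

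The main obstacle is precisely this last step: the case analysis based on which $a_{ij}$ vanish and with which signs the remaining ones appear is intricate, and the nontrivial content of the theorem is that \emph{no} genuinely new obstructions arise beyond the $\pure_3$- and $\pure_4$-circles. In other words, one must show that if a character $\chi$ fails to be $\Sigma^1$-good, then the failure is detectable inside some three- or four-strand subsystem, so that higher $\pure_k$ for $k \geq 5$ contribute no additional circles. Establishing this ``completeness at level four'' is where I expect the bulk of the combinatorial work to lie, and it is the step that will most directly use the explicit structure of the pure braid relations rather than formal properties of the BNS invariant.
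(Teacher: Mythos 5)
Your outline correctly identifies the two inclusions to be proven and recognizes the central role of the strand-forgetting maps and of the $\pure_3 \cong F_2 \times \Z$ computation, but there are two substantive differences from the paper's argument and one acknowledged gap that the proposal does not actually close.

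For the first inclusion, your invocation of Fadell--Neuwirth to get finitely generated kernels is unnecessary: the implication $[\psi]\in\Sigma^1(H)^c \Rightarrow [\psi\circ\phi]\in\Sigma^1(G)^c$ holds for \emph{any} epimorphism $\phi\colon G\onto H$ between finitely generated groups (it is the easy direction, following from the continuous surjection of Cayley graphs). More significantly, you propose to locate the $\pure_4$-circle ``directly from a finite presentation of $\pure_4$'' via Brown's criterion or a Cayley $2$-complex analysis. The paper avoids any such fresh computation by constructing an unusual epimorphism $\rho\colon\pure_4\onto\pure_3$ (Lemma~\ref{lem:rho}) that collapses pairs of generators corresponding to opposite edges in a planar embedding of $K_4$, and then realizes the $\pure_4$-circle as the pullback of the same $\Sigma^1(\pure_3)^c$ circle under $\rho\circ\phi_{ijkl}$. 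This observation is the structural heart of the ``complement'' half; without it you would indeed have to reprove the existence of that circle from scratch, which is harder and obscures why exactly $\binom{n}{3}+\binom{n}{4}$ circles appear.

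For the second inclusion, you concede that ``the bulk of the combinatorial work'' lies in showing no new obstructions appear beyond three- and four-strand subsystems, but you do not supply the mechanism that makes this tractable. The paper does not trace explicit paths in $\cay_\chi(\pure_n,I)$. Instead it uses Brown's $\R$-tree characterization of $\Sigma^1$ and the resulting ``Connected and Dominating'' criterion (Lemma~\ref{lem:condom}): find a surviving set $J$ of pairwise-commuting-in-a-connected-pattern elements that dominates a generating set $I$, and conclude $[\chi]\in\Sigma^1(G)$. It then encodes a character as an edge-weighted graph $K_\chi$ and proves the purely combinatorial ``Star or small'' lemma (Lemma~\ref{lem:star-small}), which says a graph with no edge disjoint from two others is a star or a subgraph of $K_4$. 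Together with Lemma~\ref{lem:disjoint-pair} this immediately confines the entire case analysis to stars and to the seven isomorphism types of $K_4$-subgraphs, each of which is dispatched by a short domination argument or by the linear-algebra Lemma~\ref{lem:triple-sums}. Without these two lemmas your plan has no bound on the case analysis, and this is precisely the gap that would need to be filled to turn your outline into a proof.
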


The names ``$\pure_3$-circle'' and ``$\pure_4$-circle'' are introduced
here in order to make our main result easier to state.  Their
definitions are given in Section~\ref{sec:complement}.  

\vskip 10pt

Our computation of $\Sigma^1(\pure_n)$ has a striking connection to the previously 
computed resonance variety for the pure braid groups 
(see Proposition 6.9 in \cite{CoSu99}).
The \emph{resonance variety} of a group is computed from
the structure of its cohomology ring. (For background information 
and full definitions,
see \cite{Su11}.)  In fact, there are
many resonance varieties just as there are many $\Sigma$ invariants,
but we are concerned here with the simplest forms of each.  In general
there is only a weak connection between the resonance variety of a
group and its BNS-invariant, but when certain conditions are met, the
resonance variety is contained in the complement of the BNS-invariant
\cite{PaSu10}.  In some interesting cases it is known that the complement
of the first resonance variety is equal to the first BNS invariant (see \cite{PaSu06} and \cite{Co09}).
In section 9.9 of \cite{Su11} it was asked if this equality holds for 
 fundamental groups of complements of hyperplane arrangements in $\C^n$.  The pure 
braid groups are perhaps the best
known example of an arrangement group, and so our result shows that 
this equality does hold in this case.
An example 
presented in \cite{Su12}---constructed by deleting one hyperplane
 from a reflection arrangement---demonstrates that  this equality does not always hold
for arrangement groups.  

\vskip 10pt

The article is structured as follows.  The first three sections
contain basic results about BNS-invariants, pure braid groups, and
graphs.  The fourth section finds several circles of characters in the
complement of the invariant for the pure braid groups.  The fifth
section establishes a series of reduction lemmas which collectively
show that every other character is contained in the invariant, thereby
completing the proof.

\begin{ack}
  The authors thank Ralph Strebel for requesting a
  description of the BNS-invariant for the pure braid groups some time
  ago; his continuing encouragement has helped bring this work to
  completion.  We  also  thank Alex Suciu for pointing out
  the relationship between our main result and the resonance variety
  of the pure braid groups.
\end{ack}

%%%%%%%%%%%%%%%%%%%%%%%%%%%%%%%%%%%%%%%
\section{BNS invariants}\label{sec:bns}
%%%%%%%%%%%%%%%%%%%%%%%%%%%%%%%%%%%%%%%

In this section we recall the definition of the BNS-invariant and
discuss two standard techniques used to compute them.

\begin{defn}[BNS-invariant]\label{def:bns}
  Let $G$ be a finitely generated group. A \emph{character of $G$} is
  a group homomorphism from $G$ to the additive reals and the set of
  all characters of $G$ is an $n$-dimensional real vector space where
  $n$ is the $\mathbb{Z}$-rank of the abelianization of $G$.  Let $I
  \subset G$ be a generating set and let $\cay(G,I)$ denote the right
  Cayley graph of $G$ with respect to $I$.  For any character $\chi$
  we let $\cay_\chi(G,I)$ denote the full subgraph of $\cay(G,I)$
  determined by the vertices whose $\chi$-values are non-negative.  The
  property that the BNS-invariant captures is whether or not
  $\cay_\chi(G,I)$ is connected.  It is somewhat surprising, but
  nonetheless true, that whether or not $\cay_\chi(G,I)$ is connected
  is independent of the choice of finite generating set $I$ and thus
  only depends on $\chi$.  It is much easier to see that this property
  is preserved when $\chi$ is composed with a dilation of $\R$.  
  As a consequence, one can replace characters with
  equivalence classes of characters where equivalence is defined by
  composition with dilations by positive real numbers $r$.  The set of
  equivalence classes is identified with the unit sphere in $\R^n$ and
  called the \emph{character sphere of $G$}: 
  \[
S(G) = \{ \chi~|~\chi \in {\rm Hom}(G,\mathbb{R}) - \{0\} \}/\chi \sim r\cdot \chi
\]
where $r \in (0, \infty) \subset \R$.  The
  \emph{Bieri-Neumann-Strebel-invariant of $G$} is the set of
  equivalence classes of characters $[\chi]$ such that
  $\cay_\chi(G,I)$ is connected.  We write $\Sigma^1(G)$ for this
  invariant and we write $\Sigma^1(G)^c$ for the complementary portion
  of the character sphere.  
\end{defn}

\begin{rem}
The superscript ``$1$" in the notation $\Sigma^1(G)$ indicates
  that there are generalizations of these definitions.  The first of these was introduced 
by Bieri and Renz in \cite{BiRe88}.  These invariants have also been described
in terms of Novikov homology \cite{Bi07}, and so our result
relates to the work in  \cite{KoPa12}.
Bieri and Geoghegan have presented extensions of the original
definition that are applicable to group
actions on non-positively curved spaces \cite{BiGe04}.  
\end{rem}

We use a common algebra metaphor to describe the images of elements
under $\chi$.  We say that $g$ \emph{lives} or \emph{survives} if
$\chi(g)$ is not zero and that $g$ \emph{dies} or \emph{is killed}
when $\chi(g)$ is zero.  There are two main techniques that we use to
compute BNS-invariants.  One to show that characters are in the
complement and the other to show that characters are in the invariant.

\begin{lem}[Epimorphisms]\label{lem:epis}
  Let $\phi:G \onto H$ be an epimorphism between finitely generated
  groups.  If $\psi$ is a character of $H$ and $\chi$ is the character
  of $G$ defined by $\chi = \psi \circ \phi$, then $[\chi] \in
  \Sigma^1(G)$ implies $[\psi] \in \Sigma^1(H)$ and $[\psi] \in
  \Sigma^1(H)^c$ implies $[\chi] \in \Sigma^1(G)^c$.
\end{lem}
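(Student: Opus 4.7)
The two implications are contrapositives of each other, so I would prove only the first: $[\chi] \in \Sigma^1(G)$ implies $[\psi] \in \Sigma^1(H)$. The plan is to produce a surjective graph map from $\cay_\chi(G, I)$ onto $\cay_\psi(H, J)$ for a suitable finite generating set $J$ of $H$. Since continuous surjections preserve connectedness, and since whether $\cay_\psi(H, J)$ is connected is independent of the choice of $J$ (Definition~\ref{def:bns}), this will be enough.

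Concretely, fix a finite generating set $I \subset G$ and set $J = \phi(I) \subset H$; this is finite and generates $H$ because $\phi$ is surjective. The homomorphism $\phi$ induces a graph map $\Phi : \cay(G, I) \to \cay(H, J)$ sending each vertex $g$ to $\phi(g)$ and each edge from $g$ to $gs$ (labeled $s \in I$) to the edge from $\phi(g)$ to $\phi(g)\phi(s)$ (labeled $\phi(s) \in J$). Because $\chi = \psi \circ \phi$, the $\chi$-value of any vertex $g$ equals the $\psi$-value of its image $\phi(g)$, so $\Phi$ restricts to a graph map
\[
\Phi : \cay_\chi(G, I) \longrightarrow \cay_\psi(H, J).
\]

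I would then verify this restriction is surjective on vertices and edges. Any vertex $h$ of $\cay_\psi(H, J)$ has $\psi(h) \geq 0$; picking any $g \in \phi^{-1}(h)$ gives $\chi(g) = \psi(h) \geq 0$, so $g$ lies in $\cay_\chi(G, I)$ and maps to $h$. Any edge from $h$ to $h\phi(s)$ in $\cay_\psi(H, J)$ with $s \in I$ (both endpoints in the positive-$\psi$ subgraph) then lifts to the edge from $g$ to $gs$ in $\cay_\chi(G, I)$, whose second endpoint survives because $\chi(gs) = \psi(h\phi(s)) \geq 0$. Since surjective graph maps preserve connectedness, connectedness of $\cay_\chi(G, I)$ forces connectedness of $\cay_\psi(H, J)$, giving $[\psi] \in \Sigma^1(H)$.

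There is no serious obstacle here; the only subtlety is choosing to work with the specific generating set $J = \phi(I)$ (which is permitted by the generating-set independence in Definition~\ref{def:bns}) and invoking the identity $\chi = \psi \circ \phi$ at the two places where one needs $\Phi$ to stay within the positive subgraphs and needs to lift edges along it.
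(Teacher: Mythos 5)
Your proof is correct and follows essentially the same approach as the paper: choose generating sets with $J = \phi(I)$, observe that $\phi$ induces a continuous surjection $\cay_\chi(G,I) \onto \cay_\psi(H,J)$, and conclude via the fact that continuous surjections preserve connectedness. You simply spell out the surjectivity verification that the paper leaves implicit.
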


\begin{proof}
  If we choose generating sets $I$ and $J$ for $G$ and $H$
  respectively so that $\phi(I) = J$, then the epimorphism $\phi$
  naturally extends to a continuous map from the Cayley graph of $G$
  onto the Cayley graph of $H$ which then restricts to a continuous
  map from $\cay_\chi(G,I)$ onto $\cay_\psi(H,J)$.  Since the
  continuous image of a connected space is connected, $\cay_\chi(G,I)$
  connected implies $\cay_\psi(H,J)$ is connected and $\cay_\psi(H,J)$
  disconnected implies $\cay_\chi(G,I)$ is disconnected.
\end{proof}

Lemma~\ref{lem:epis} is primarily used is to find characters in
$\Sigma^1(G)^c$.  For each homomorphism $\phi$ from $G$ onto a simpler
group $H$ whose BNS-invariant is already known, the preimage of
$\Sigma^1(H)^c$ under $\phi$ is a subset of $\Sigma^1(G)^c$.  A second
use of Lemma~\ref{lem:epis} is that it implies $\Sigma^1(G)$ is
invariant under automorphisms of $G$.  For any finitely generated
group $G$, precomposition defines a natural right action of $\aut(G)$
on the character sphere with $[\chi]\cdot \alpha$ defined to be $[\chi
  \circ \alpha]$ for all $\alpha \in \aut(G)$ and all characters
$\chi$.  For each automorphism $\alpha \in \aut(G)$,
Lemma~\ref{lem:epis} can be applied twice, once with $\phi=\alpha$ and
a second time with $\phi=\alpha^{-1}$ to obtain the following
immediate corollary.

\begin{cor}[Automorphisms]\label{cor:auts}
  For any finitely generated group $G$, the subsets $\Sigma^1(G)$ and
  $\Sigma^1(G)^c$ are invariant under the natural right action of
  $\aut(G)$ on the character sphere of $G$.
\end{cor}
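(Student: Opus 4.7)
The plan is to apply Lemma~\ref{lem:epis} twice, exactly as the paragraph preceding the corollary already hints. The key observation is that an automorphism $\alpha \in \aut(G)$ is in particular a surjective homomorphism $G \onto G$, so the hypothesis of Lemma~\ref{lem:epis} is satisfied with $H = G$ and $\phi = \alpha$; and the same is true of $\alpha^{-1}$.

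For the forward direction, fix any $\alpha \in \aut(G)$ and any character $\chi$ of $G$. I would apply Lemma~\ref{lem:epis} with $\phi = \alpha$ and $\psi = \chi$; the induced character on the domain is $\psi \circ \phi = \chi \circ \alpha$. The lemma then yields the implication $[\chi \circ \alpha] \in \Sigma^1(G) \Rightarrow [\chi] \in \Sigma^1(G)$, and, in its complementary form, $[\chi] \in \Sigma^1(G)^c \Rightarrow [\chi \circ \alpha] \in \Sigma^1(G)^c$.

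For the reverse direction, I would apply the lemma a second time with $\phi = \alpha^{-1}$ (an epimorphism since it is an automorphism) and $\psi = \chi \circ \alpha$; now the induced character on the domain is $(\chi \circ \alpha) \circ \alpha^{-1} = \chi$. This gives $[\chi] \in \Sigma^1(G) \Rightarrow [\chi \circ \alpha] \in \Sigma^1(G)$ and the complementary version $[\chi \circ \alpha] \in \Sigma^1(G)^c \Rightarrow [\chi] \in \Sigma^1(G)^c$. Combining the two applications gives the equivalence $[\chi] \in \Sigma^1(G) \iff [\chi] \cdot \alpha \in \Sigma^1(G)$, with the analogous equivalence for the complement, so both subsets are stable under the right action of $\aut(G)$.

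There is no real obstacle here; the statement earns the label ``corollary'' because the only content beyond Lemma~\ref{lem:epis} is recognizing that $\alpha$ and $\alpha^{-1}$ are simultaneously available as epimorphisms, which upgrades the one-directional implications of the lemma to an $\aut(G)$-equivariance statement. The one minor bookkeeping point worth checking explicitly is that the action $[\chi] \cdot \alpha = [\chi \circ \alpha]$ is well-defined on equivalence classes in $S(G)$, but this is immediate because composition with $\alpha$ commutes with dilation by positive reals.
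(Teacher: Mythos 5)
Your proposal is correct and matches the paper's own (one-sentence) justification exactly: apply Lemma~\ref{lem:epis} twice, once with $\phi=\alpha$ and once with $\phi=\alpha^{-1}$, to upgrade the one-directional implications to the desired equivalences. The bookkeeping you spell out (tracking $\psi\circ\phi$ in each application, and well-definedness of the action on equivalence classes) is all accurate.
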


There is an alternative description of $\Sigma^1(G)$ using $G$-actions on
$\R$-trees.

\begin{defn}[Actions on $\R$-trees]\label{def:r-trees}
 Suppose $G$ acts by isometries on an $\mathbb{R}$-tree $T$ and let
 $\ell:G \to \mathbb{R}^+$ be the corresponding length function.  It
 is \emph{non-trivial} if there are no global fixed points. It is
 \emph{exceptional} if there are no invariant lines.  It is
 \emph{abelian} if there exists a character $\chi$ of $G$ such that
 the translation length function $\ell(g)$ equals the absolute value
 of $\chi(g)$ for all $g\in G$.  When this occurs we say that this
 action is \emph{associated to $\chi$}.
\end{defn}

The following lemma describes $\Sigma^1(G)$ in these terms.

\begin{lem}[Actions and characters]\label{lem:brown}
  Let $\chi$ be a character of a group $G$.  There exists an
  exceptional non-trivial abelian $G$-action on an $\R$-tree
  associated to $\chi$ if and only if $[\chi] \in \Sigma^1(G)^c$.
\end{lem}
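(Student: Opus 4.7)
The plan is to prove both implications separately, following the classical correspondence between $\Sigma^1$ and actions on $\R$-trees developed by Brown.

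For the direction $(\Rightarrow)$, suppose $G$ acts on an $\R$-tree $T$ exceptionally, non-trivially, and abelianly with associated character $\chi$. The first step is to show that the absence of an invariant line, combined with the fact that translation lengths factor through a character, forces the existence of a unique $G$-fixed end $\omega \in \partial T$; two hyperbolic elements whose axes failed to share this end would obstruct the abelian length function. Fix a basepoint $p_0 \in T$ and let $b : T \to \R$ be the Busemann function at $\omega$ with $b(p_0) = 0$; standard $\R$-tree calculations give $b(g \cdot p_0) = \chi(g)$ for every $g \in G$. Choose a finite generating set $I$ and map $\cay(G, I)$ continuously into $T$ by $g \mapsto g \cdot p_0$, sending the edge labelled $s \in I$ at vertex $g$ to the geodesic segment $[g \cdot p_0,\, gs \cdot p_0]$. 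Exceptionality produces non-trivial branching in $T$, so the sublevel set $\{x \in T : b(x) \geq c\}$ splits into more than one subtree for suitable $c$; pulling back this disconnection through the Cayley map disconnects $\cay_\chi(G, I)$, giving $[\chi] \in \Sigma^1(G)^c$.

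For the direction $(\Leftarrow)$, suppose $[\chi] \in \Sigma^1(G)^c$ so that $\cay_\chi(G, I)$ is disconnected. The plan is to build an $\R$-tree $T_\chi$ whose points encode how the Cayley graph branches as one descends in $\chi$-value. For each $c \in \R$ let $\pi_c$ denote the set of connected components of the subgraph of $\cay(G, I)$ spanned by the vertices with $\chi$-value at least $c$; inclusion of subgraphs produces natural projections $\pi_{c'} \to \pi_c$ whenever $c < c'$, and the whole inverse system carries a $G$-action. Gluing these component sets along their projections and equipping the result with the obvious height metric produces a space carrying a $G$-action by isometries. Because $\chi$ is a homomorphism, the translation lengths come out to $|\chi(g)|$, so the action is abelian with associated character $\chi$; non-triviality follows from $\chi \neq 0$, and exceptionality follows from the hypothesized disconnection, which forces at least two components to coexist at a common level and so precludes any invariant line.

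The principal obstacle is the construction in $(\Leftarrow)$: verifying that the space built from the inverse system of component sets is actually a $0$-hyperbolic, uniquely geodesic metric space rather than merely a poset, and that the resulting $G$-action realises the translation lengths $|\chi(g)|$ exactly. This is essentially Brown's original argument, requiring a careful check of the four-point condition for $\R$-trees, and I would follow the approach of \cite{BNS87} to handle the bookkeeping.
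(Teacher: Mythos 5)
The paper does not prove this lemma at all; immediately after the statement it says ``A proof of Lemma~\ref{lem:brown} can be found in \cite{Br87}'' and moves on. So there is no in-paper proof to compare against, and your reconstruction of Brown's argument is, in principle, a genuine addition.

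That said, your sketch has gaps that go beyond bookkeeping. In the $(\Rightarrow)$ direction you write that ``the sublevel set $\{x \in T : b(x) \geq c\}$ splits into more than one subtree'' and pulling this back disconnects $\cay_\chi(G,I)$. Two problems. First, with the usual convention that $b\to+\infty$ toward the fixed end $\omega$, the set $\{b\geq c\}$ is a horoball, which is \emph{convex and connected} in an $\R$-tree; it is the complementary sets $\{b< c\}$ that can fall apart into many pieces. Second, and more seriously, even after you fix the sign, the pullback does not disconnect $\cay_\chi(G,I)$ for free: an edge path in $\cay(G,I)$ through vertices with $\chi\geq 0$ maps to a concatenation of geodesic segments $[g p_0, gs p_0]$ in $T$, and on a tree such a segment can rise \emph{above} the $b$-level of both its endpoints (it goes up to the confluence point toward $\omega$ and comes back down). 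So two vertices of $\cay_\chi(G,I)$ that land in different components of $\{b<c\}$ could still be joined by a path whose image climbs through the horoball. You need an argument (Brown's) that the component of $\{b<c\}$ containing $g p_0$ is constant along such paths, which is where the real content lies. You also assert without proof that an exceptional nontrivial abelian action has a unique fixed end; this is true but needs an argument from the length-function identity $\ell = |\chi|$.

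There is also a more structural issue that your proof inherits from the statement itself. As the paper defines it, an action is ``associated to $\chi$'' exactly when $\ell = |\chi|$, and this condition cannot distinguish $\chi$ from $-\chi$. If the lemma were literally true, it would force $[\chi]\in\Sigma^1(G)^c \Leftrightarrow [-\chi]\in\Sigma^1(G)^c$, but $\Sigma^1$ is well known not to be symmetric under $\chi\mapsto-\chi$ (the Baumslag--Solitar group $BS(1,2)$ acting on its Bass--Serre tree gives an exceptional nontrivial abelian action with $\ell=|\chi|$, yet only one of $[\pm\chi]$ lies in $\Sigma^1(BS(1,2))^c$). Brown's actual theorem carries more data than the paper records: it is phrased in terms of a $G$-fixed end and the \emph{Busemann character} at that end, and it is the sign of that character, not just $|\chi|$, that matches $\chi$. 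Your $(\Rightarrow)$ argument actually introduces this extra data (the fixed end and $b$) but then drops the sign; your $(\Leftarrow)$ construction builds a tree whose fixed end is canonically determined by the direction of $\chi$, so it secretly keeps the sign. Tracking the Busemann character explicitly in both directions is exactly what is needed to make the equivalence correct, and is the substantive content you would have to supply beyond citing \cite{BNS87} for the ``bookkeeping.''
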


A proof of Lemma~\ref{lem:brown} can be found in \cite{Br87}.  For
each $g \in G$, let $T_g$ denote the characteristic subtree of $g$.
When $g$ is elliptic, $T_g$ is its fixed point set, and when $g$ is
hyperbolic, $T_g$ is the axis of $g$.  There are two main facts about
characteristic subtrees that we need: (1) if $g$ and $h$ are commuting
hyperbolic isometries then $T_g = T_h$ and (2) if $g$ commutes with a
hyperbolic isometry $h$ then $T_g \supset T_h$.  Both properties are
discussed in \cite{Or00}.

\begin{defn}[Commutation]\label{def:commute}
  For any subset $J$ of a group $G$ there is a natural graph that
  records which elements commute.  It has a vertex set indexed by $J$
  and two distinct vertices are connected by an edge if only if the
  corresponding elements of $J \subset G$ commute.  We call this the
  \emph{commuting graph of $J$ in $G$} and denote it by $C(J)$.
\end{defn}

\begin{defn}[Domination]\label{def:dominate}
  Let $I$ and $J$ be subsets of a group $G$.  We say that \emph{$J$
    dominates $I$} if every element of $I$ commutes with some element
  of $J$.  Since elements commute with themselves, this is equivalent
  to the assertion that every element in $I\setminus J$ commutes with
  some element of $J$.
\end{defn}

\begin{lem}[Connected and Dominating]\label{lem:condom}
  Let $\chi$ be a character of a group $G$.  If there exist subsets
  $I$ and $J$ in $G$ such that $J$ survives under $\chi$, $C(J)$ is
  connected, $J$ dominates $I$, and $I$ generates $G$, then $[\chi]
  \in \Sigma^1(G)$.
\end{lem}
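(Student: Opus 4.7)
The plan is to prove the contrapositive via Lemma~\ref{lem:brown}: assume $[\chi]\in\Sigma^1(G)^c$, obtain an exceptional non-trivial abelian $G$-action on an $\R$-tree $T$ associated to $\chi$, and produce an invariant line, contradicting exceptionality.

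First I would use the hypothesis that $J$ survives: every $j\in J$ has $\chi(j)\neq 0$, so its translation length $\ell(j)=|\chi(j)|$ is positive, which forces every element of $J$ to act hyperbolically on $T$. In particular, each $j\in J$ has a well-defined axis $T_j$, which is a line in $T$.

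Next I would exploit the fact that $C(J)$ is connected together with fact (1) about characteristic subtrees. If $j,j'\in J$ are joined by an edge of $C(J)$, then $j$ and $j'$ commute and are both hyperbolic, hence $T_j=T_{j'}$. Walking along any path in $C(J)$ then shows that all of the axes coincide, yielding a single line $L\subset T$ which is the axis of every element of $J$.

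Finally I would use domination to transfer invariance of $L$ from $J$ to all of $I$, and hence to $G$. Given any $i\in I$, there exists $j\in J$ that commutes with $i$; since $j$ is hyperbolic, fact (2) gives $T_i\supset T_j=L$. If $i$ is elliptic, then $T_i$ is its fixed point set, so $i$ fixes $L$ pointwise; if $i$ is hyperbolic, then $T_i$ is a line containing the line $L$, so $T_i=L$ and $i$ translates along $L$. In either case $i$ preserves $L$ setwise. Because $I$ generates $G$, the whole group preserves $L$, so $L$ is a $G$-invariant line in $T$, contradicting the exceptionality of the action. Hence $[\chi]\in\Sigma^1(G)$.

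The only delicate step is the last one, where one must be careful that ``$T_i\supset T_j$'' really implies $i$ preserves $L$ in both the elliptic and hyperbolic cases; once that case analysis is handled, the rest is a direct chain of implications using the two stated properties of characteristic subtrees.
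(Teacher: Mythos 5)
Your proof is correct and follows the same strategy as the paper: apply Lemma~\ref{lem:brown}, use survival to make $J$ hyperbolic, connectivity of $C(J)$ to collapse the axes to a single line, domination plus the two facts about characteristic subtrees to make that line invariant. Your explicit elliptic/hyperbolic case analysis at the end simply spells out the step the paper compresses into ``so the line $T'$ is invariant under all of $G$.''
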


\begin{proof}
  Suppose there is an abelian action of $G$ on an $\R$-tree $T$
  associated to $\chi$.  Since elements of $J$ survive under $\chi$,
  each is realized as a hyperbolic isometry of the tree.  Because  $C(J)$
 is connected, all of these isometries share a common
  characteristic subtree $T' = T_j$ for all $j\in J$.  Because $J$
  dominates $I$, each element $i \in I$ commutes with a hyperbolic
  isometry $j \in J$ which implies $T_i \supset T_j = T'$ for all $i\in
  I$.  Finally, $I$ generates $G$, so the line $T'$ is invariant under
  all of $G$, the action is not exceptional, and $[\chi] \in
  \Sigma^1(G)$ by Lemma~\ref{lem:brown}.
\end{proof}

Lemma~\ref{lem:condom} is our primary tool for finding characters in
$\Sigma^1(G)$.  To illustrate its utility we include an application:
characters in the complement of the BNS-invariant must kill the center of
the group.

\begin{cor}[Central elements]\label{cor:center}
  If $\chi$ is a character of a group $G$ and $\chi$ is not
  identically zero on the center of $G$ then $[\chi]$ is in
  $\Sigma^1(G)$.
\end{cor}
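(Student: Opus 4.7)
The plan is to apply Lemma~\ref{lem:condom} directly with a one-element set $J$. Since $\chi$ is assumed to be non-zero somewhere on the center of $G$, we can pick a central element $z \in Z(G)$ with $\chi(z) \neq 0$, and set $J = \{z\}$.

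The four hypotheses of Lemma~\ref{lem:condom} are then immediate. First, $J$ survives under $\chi$ by the choice of $z$. Second, the commuting graph $C(J)$ has a single vertex and no edges, which is trivially connected. Third, letting $I$ be any finite generating set of $G$, every element of $I$ commutes with $z$ (because $z$ is central), so $J$ dominates $I$. Fourth, $I$ generates $G$ by construction. Lemma~\ref{lem:condom} then yields $[\chi] \in \Sigma^1(G)$.

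There is really no obstacle here: the corollary is advertised in the paragraph preceding its statement as an illustration of how easy Lemma~\ref{lem:condom} makes things, and the existence of a single surviving central element is exactly what one needs to short-circuit every condition. The only point worth mentioning explicitly in the write-up is that a one-vertex graph counts as connected, so that the ``connectedness of $C(J)$'' hypothesis is not vacuous but genuinely satisfied.
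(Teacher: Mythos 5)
Your proof is correct and follows exactly the same route as the paper: take $J$ to be a singleton consisting of a surviving central element, note that $C(J)$ is a one-vertex (hence connected) graph and that centrality gives domination of any generating set $I$, then invoke Lemma~\ref{lem:condom}.
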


\begin{proof}
  Let $I$ be any generating set for $G$ and let $J = \{g\}$ where $g$
  is a central element that lives under $\chi$.  The graph $C(J)$ is
  connected because it only has one vertex and $J$ dominates $I$
  because $g$ is central.  Lemma~\ref{lem:condom} completes the proof.
\end{proof}

%%%%%%%%%%%%%%%%%%%%%%%%%%%%%%%%%%%%%%%%%%%
\section{Pure braid groups}\label{sec:pure}
%%%%%%%%%%%%%%%%%%%%%%%%%%%%%%%%%%%%%%%%%%%

Next we recall some basic properties of the pure braid groups.

\begin{defn}[Pure braid groups]\label{def:braid-arrangement}
  Let $\C^n$ be an $n$-dimensional complex vector space with a fixed
  basis and let $H_{ij}$ be the hyperplane in $\C^n$ defined by the
  equation $z_i=z_j$.  The set $\{H_{ij}\}$ of all such hyperplanes is
  called the \emph{braid arrangement} and it is one of the standard
  examples in the theory of hyperplane arrangements.  The fundamental
  group of the complement of the union of these hyperplanes is called
  the \emph{pure braid group $\pure_n$}:
\[
\pure_n = \pi_1\left(\C^n \setminus \{H_{ij}\}\right).
\]
\end{defn}

\begin{defn}[Points in the plane]
  There is a standard $2$-dimensional way to view points in the
  complement of the braid arrangement.  For each vector in $\C^n$ we
  have a configuration of $n$ labeled points in the complex plane.
  More concretely, the point $p_i$ in $\C$ is meant to indicate the
  value of the $i$-th coordinate of the vector and avoiding the
  hyperplanes $H_{ij}$ corresponds to configurations where these
  points are distinct.  Paths in the hyperplane complement correspond
  to motions of these $n$ labeled points in the plane which remain
  distinct throughout.  If we trace out these motions over time in a
  product of $\C$ with a time interval, then the points become strands
that braid.
\end{defn}

\begin{defn}[Basepoint]
  Computing the fundamental group of a hyperplane complement requires
  a choice of basepoint.  We select one corresponding to the
  configuration where the $n$ labeled points are equally spaced around
  the unit circle and $p_1$ through $p_n$ occur consecutively as one
  proceeds in a clockwise direction.  See Figure~\ref{fig:convex} for
  an illustration.  Loops representing elements of the fundamental
  group are motions of these points which start and end at this
  particular configuration.
\end{defn}

\begin{figure}
  \begin{tikzpicture}[scale=3]
    \definecolor{lightblue}{rgb}{.85,.85,1};
    \definecolor{mediumblue}{rgb}{.7,.7,1};
    \fill[color=lightblue]
    (0:1cm)--(40:1cm)--(80:1cm)--(120:1cm)--(160:1cm)--
    (200:1cm)--(240:1cm)--(280:1cm)--(320:1cm)--cycle;
    \fill[color=mediumblue]  (80:1cm)--(320:1cm)--(280:1cm)--cycle;
    \draw[-] (80:1cm)--(320:1cm)--(280:1cm)--cycle;
    \draw[-] (40:1cm)--(0:1cm);
    \draw[-] (120:1cm)--(200:1cm);
    \draw[-] (160:1cm)--(240:1cm);
    \draw (0:1cm) node [anchor=west] {$p_3$};
    \draw (40:1cm) node [anchor=south west] {$p_2$};
    \draw (80:1cm) node [anchor=south] {$p_1$};
    \draw (120:1cm) node [anchor=south east] {$p_9$};
    \draw (160:1cm) node [anchor=east] {$p_8$};
    \draw (200:1cm) node [anchor=east] {$p_7$};
    \draw (240:1cm) node [anchor=north east] {$p_6$};
    \draw (280:1cm) node [anchor=north] {$p_5$};
    \draw (320:1cm) node [anchor=north west] {$p_4$};
    \foreach \x in {0,1,2,3,4,5,6,7,8} {\fill (40*\x:1cm) circle (.3mm);}
    \foreach \x in {0,1,2,3,4,5,6,7,8} {\fill[color=white] (40*\x:1cm) circle
      (.2mm);}
  \end{tikzpicture}
  \caption{Nine points in convex position.\label{fig:convex}}
\end{figure}
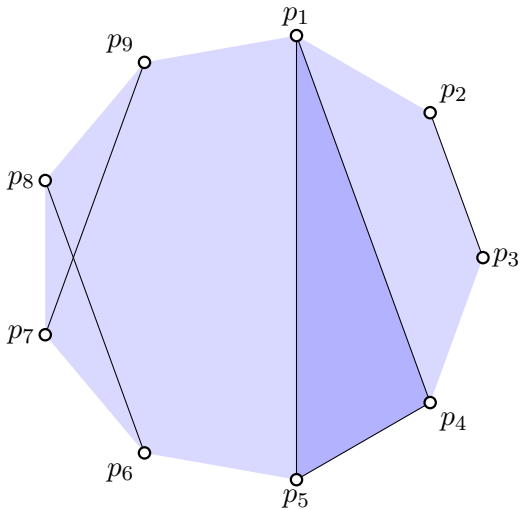

\begin{defn}[Swing generators]\label{def:swings}
  For each set $A \subset \{1,2,\ldots,n\}$ of size at least $2$ there
  is an element of $\pure_n$ obtained as follows.  Move the
  points corresponding to the elements of $A$ directly towards the
  center of their convex hull.  Once they are near to each other,
  rotate the small disk containing them one full twist in a clockwise
  direction and then return these points to their original position
  traveling back the way they came.  When $A$ is small we write
  $S_{ij}$ or $S_{ijk}$ with the subscripts indicating the points
  involved.  In \cite{MaMc09} these elements are called \emph{swing
    generators}.  One key property of the swing generator $S_A$ is
  that it can be rewritten as a product of the swing generators $S_{ij}$
  with $\{i,j\} \subset A$.  The order in which they are multiplied is
  important, but it rarely arises in this context.  As an illustration
  of this type of factorization, the element $S_{123}$ is equal to the
  product $S_{12} S_{13} S_{23}$ and to $S_{13} S_{23} S_{12}$ and to
  $S_{23} S_{12} S_{13}$.  (For the record we are composing these
  elements left-to-right as is standard in the study of braid groups.)
  This means that the $\binom{n}{2}$ swing generators which only
  involve two points are sufficient to generate $\pure_n$ and we call
  this set the \emph{standard generating set for this arrangement}.
\end{defn}

A presentation for the pure braid group was given by Artin in
\cite{Art47} and more recent geometric variations are given by
Margalit and McCammond in \cite{MaMc09}.  For our purposes the most
relevant fact about these various presentations is that all of their
relations become trivial when abelianized.  This immediately implies
the following:

\begin{lem}[Pure braid characters]\label{lem:pn-characters}
  The abelianization of the pure braid group is free abelian with the
  images of the standard generators as a basis.  As a consequence,
  there are no restrictions on the tuples of values a character may
  assign to the standard generators.   Thus $\mbox{Hom}(\pure_n, \R)
\simeq \R^{\binom{n}{2}}$ and the character sphere
has dimension $\binom{n}{2} - 1$:
\[
S(\pure_n) = \sph^{\binom{n}{2} - 1}.
\]
\end{lem}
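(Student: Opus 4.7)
The plan is to invoke the presentation of $\pure_n$ cited just before the statement (Artin's original presentation, or equivalently the Margalit--McCammond variation) and verify directly that every defining relation becomes a trivial identity upon abelianization. Once that fact is established, the three conclusions of the lemma follow by essentially formal manipulations.

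First I would recall Artin's presentation: $\pure_n$ is generated by the $\binom{n}{2}$ swing generators $S_{ij}$ subject to four families of ``pure braid relations,'' each of which has the form $g^{-1}\,S_{ij}\,g = w$ where $w$ is a specific conjugate of $S_{ij}$ by a product of generators. In any such relation, both sides have the same exponent-sum in each generator $S_{k\ell}$, because conjugation preserves exponent sums. Consequently, after abelianizing, every relation collapses to a tautology $0 = 0$, and the abelianization is the free abelian group on the classes of the $S_{ij}$. (Alternatively, one can argue inductively from the Fadell--Neuwirth fibration $\pure_n \onto \pure_{n-1}$ with kernel a free group $F_{n-1}$: the associated five-term exact sequence in homology, together with the observation that the generators of the free kernel are in bijection with the new swings $S_{in}$, yields $H_1(\pure_n) = H_1(\pure_{n-1}) \oplus \Z^{n-1}$, which telescopes to $\Z^{\binom{n}{2}}$.)

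Given $\pure_n^{\mathrm{ab}} \cong \Z^{\binom{n}{2}}$ with basis the images of the standard generators, the space $\Hom(\pure_n, \R)$ is canonically identified with the dual vector space $\R^{\binom{n}{2}}$: a character is determined by, and may freely be prescribed by, its values on the $S_{ij}$. The character sphere $S(\pure_n)$, being the quotient of $\Hom(\pure_n,\R)\setminus\{0\}$ by positive real dilations, is then the unit sphere of $\R^{\binom{n}{2}}$, i.e.\ $\sph^{\binom{n}{2}-1}$.

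The only step with any real content is the abelianization check, and I expect it to be the main (though not serious) obstacle: one must either reproduce the four families of Artin relations and inspect exponent sums, or invoke the Fadell--Neuwirth machinery. Everything afterwards is elementary duality.
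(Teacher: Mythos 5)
Your proposal is correct and follows the same route as the paper: the authors simply observe in the paragraph preceding the lemma that every relation in Artin's (or Margalit--McCammond's) presentation trivializes upon abelianization, and declare the lemma an immediate consequence. You supply the one detail they leave implicit --- that the Artin relations are conjugation identities and hence preserve exponent sums --- and your Fadell--Neuwirth alternative is a valid but unneeded bonus.
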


There are two aspects of the pure braid group that are particularly
useful in this context.  The first is that many pairs of swing
generators commute and the second is that there is an automorphism of
$\pure_n$ whose net effect is to permute the labeled points in the
plane without changing the character values on the corresponding
standard generators.

\begin{rem}[Commuting swings]
  Let $S_A$ and $S_B$ be two swing generators in $\pure_n$.  The
  elements $S_A$ and $S_B$ commute when $A \subset B$, $B \subset A$,
  or the convex hull of the points in $A$ does not intersect the
  convex hull of the points in $B$ \cite{MaMc09}.  For example
  $S_{23}$ and $S_{145}$ commute as do $S_{14}$ and $S_{145}$, but
  $S_{68}$ and $S_{79}$ do not.  See Figure~\ref{fig:convex}. One
  consequence of this property is that the element $\Delta = S_A$ with
  $A = \{1,2,\ldots,n\}$ is central in $\pure_n$.  In fact $\Delta$
  generates the center.
\end{rem}

%\begin{rem}[Braid group]
  There is an obvious action of the symmetric group on the braid
  arrangement which permutes coordinates.  And since the union of the
  hyperplanes $H_{ij}$ contains all the points fixed under the action
  of a nontrivial permutation, the action on the complement is
  free. If we quotient by this action, the effect is to remove the
  labels from the points in the plane and the fundamental group of the
  quotient is the braid group.  This relationship is captured by the
  fact that there is a natural epimorphism from the braid group to the
  symmetric group (where the image of a braid is the way it permutes
  its strands) and its kernel is the pure braid group.
%\end{rem}            

%\begin{rem}[Permuting the points]
  The symmetric group action on the braid arrangement essentially
  changes the basepoint in the hyperplane complement and permutes the
  labels on the points in the plane.  For each such basepoint there is
  a set of swing generators but recall that there is no natural
  isomorphism between the fundamental group of a connected space at
  one basepoint and its fundamental group at another.  To create an
  isomorphsim one selects a path from the one to the other and then
  conjugates by this path.  In our case such a path projects to a loop
  in the quotient by the symmetric group action and thus represents an
  element of the braid group.  In particular, the resulting
  isomorphism between the fundamental groups is induced by an inner
  automorphism of the braid group which descends to an automorphism of
  its pure braid subgroup.  By Corollary~\ref{cor:auts} this
  automorphism of $\pure_n$ does not alter the BNS-invariant or its
  complement.

  It does, however, change the standard generating set.  If we keep
  track of the motion of the points in the plane dictated by the path
  between the basepoints, we find that the straight line segment
  between $p_i$ and $p_j$ used to define $S_{ij}$ becomes an embedded
  arc between the images of these points that is typically very
  convoluted.  In other words, the image of the original swing
  generator $S_{ij}$ is a nonstandard generator where the points $p_i$
  and $p_j$ travel along the twisted embedded arc from either end
  until they are very close, they then rotate fully around each other
  clockwise and then they return the way they came.  Despite the fact
  that the image of a standard generator is no longer standard, it is
  true that the new nonstandard generator is conjugate in $\pure_n$ to
  the standard generator between these two points.  In particular, for
  any character $\chi$, the $\chi$-value of a standard generator
  $S_{ij}$ is equal to the $\chi$-value of the standard generator
  between the images of $p_i$ and $p_j$ under this automorphism of
  $\pure_n$.
%\end{rem}

We conclude this section with a discussion of epimorphisms between
pure braid groups.

\begin{defn}[Natural projections]\label{def:phi_A}
  For every subset $A \subset \{1,2,\ldots,n\}$ of size $k$ there is a
  natural projecting epimorphism $\phi_A:\pure_n \onto \pure_k$ which
  can be described topologically as ``forgetting'' what happens to the
  points not in $A$.  Algebraically $\phi_A$ sends a standard
  generator $S_{ij}$ to zero unless both endpoints belong to $A$.
  This produces $\binom{n}{k}$ epimorphisms from $\pure_n$ onto
  $\pure_k$ which are all distinct.  The situations with $k=3$ or
  $k=4$ are particularly important here and we denote these maps by
  $\phi_{ijk}$ and $\phi_{ijkl}$ where the subscripts indicate the
  points contained in $A$.
\end{defn}

The fact that the complete graph on $4$ vertices is planar leads to a
nice presentation for $\pure_4$ and a surprising projection from $\pure_4$
onto $\pure_3$.

\begin{figure}
  \begin{tikzpicture}[scale=3]
    \draw[-] (330:1cm)--(210:1cm)--(0,0)--(90:1cm)--cycle;
    \draw[-] (210:1cm)--(90:1cm);
    \draw[-] (330:1cm)--(0,0);
    \draw (0,0) node [below=2pt] {$p_3$};
    \draw (90:1cm) node [above=2pt] {$p_2$};
    \draw (210:1cm) node [anchor=north east] {$p_1$};
    \draw (330:1cm) node [anchor=north west] {$p_4$};
    \fill[color=white] (150:5mm) circle (.6mm);\draw (150:5mm) node {$a$};
    \fill[color=white] (210:5mm) circle (.6mm);\draw (210:5mm) node {$b$};
    \fill[color=white] (90:5mm) circle (.6mm);\draw (90:5mm) node {$c$};
    \fill[color=white] (330:5mm) circle (.6mm);\draw (330:5mm) node {$d$};
    \fill[color=white] (30:5mm) circle (.6mm);\draw (30:5mm) node {$e$};
    \fill[color=white] (270:5mm) circle (.6mm);\draw (270:5mm) node {$f$};
    \fill (0,0) circle (.3mm);\fill[color=white] (0,0) circle (.2mm);
    \fill (90:1cm) circle (.3mm); \fill[color=white] (90:1cm) circle (.2mm);
    \fill (210:1cm) circle (.3mm); \fill[color=white] (210:1cm) circle (.2mm);
    \fill (330:1cm) circle (.3mm); \fill[color=white] (330:1cm) circle (.2mm);
  \end{tikzpicture}
  \caption{A labeled planar embedding of $K_4$.\label{fig:k4}}
\end{figure}
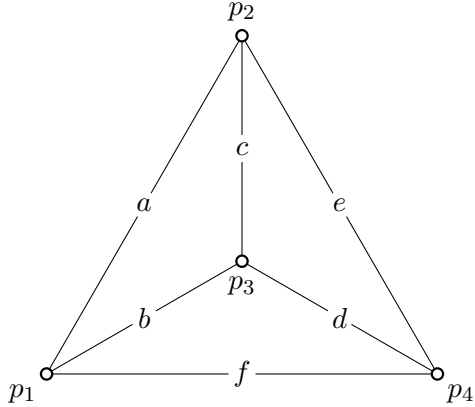

\begin{defn}[Planar presentation of $\pure_4$]\label{def:p4-pres}
  If we pick a basepoint for the braid arrangement that corresponds to
  the configuration of points shown in Figure~\ref{fig:k4}, then the
  six straight segments connecting them pairwise produce six swing
  generators that form a nonstandard generating set for $\pure_4$.  We
  denote these $a$ through $f$ as indicated.  The following is a
  presentation for $\pure_4$ in this generating set:
  \begin{equation*} 
    \pure_4 \cong \left< a,b,c,d,e,f\ \begin{array}{|l}
    abc=bca=cab,\ ad=da\\ cde=dec=ecd,\ be=eb\\ bfd=fdb=dbf,\ cf=fc\end{array}
    \right>
  \end{equation*}
  \noindent
  We call this the \emph{planar presentation of $\pure_4$}.  This
  presentation appears to be folklore: it is well-known to experts in
  the field but we cannot find a reference to it in the literature.
  Since it is straightforward to produce this presentation from one of
  the standard presentations, we omit the derivation.
\end{defn}

\begin{lem}[An unusual map]\label{lem:rho}
  There is a morphism $\rho:\pure_4 \onto \pure_3$ which sends the
  pairs of generators representing disjoint edges in the planar
  presentation of $\pure_4$ to the same standard generator of
  $\pure_3$.  Concretely, the function that sends both $a$ and $d$ to
  $a = S_{12}$, both $b$ and $e$ to $b= S_{13}$ and both $c$ and $f$
  to $c = S_{23}$ extends to such an epimorphism.
\end{lem}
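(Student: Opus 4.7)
The plan is to use the planar presentation of $\pure_4$ directly and verify that the proposed assignment on generators extends to a well-defined homomorphism, then note that surjectivity is automatic.

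First I would define $\rho$ on the free group on $\{a,b,c,d,e,f\}$ by $\rho(a) = \rho(d) = S_{12}$, $\rho(b)=\rho(e) = S_{13}$, and $\rho(c)=\rho(f) = S_{23}$. To promote this to a homomorphism on $\pure_4$, I need to check that each of the nine relations in Definition~\ref{def:p4-pres} becomes a valid identity in $\pure_3$. The commutation relations $ad=da$, $be=eb$, $cf=fc$ are sent to the trivial identities $S_{12}^2 = S_{12}^2$, $S_{13}^2 = S_{13}^2$, $S_{23}^2 = S_{23}^2$, so those are immediate. The three triangular relations $abc=bca=cab$, $cde=dec=ecd$, and $bfd=fdb=dbf$ are all sent to the single relation
\[
S_{12} S_{13} S_{23} \;=\; S_{13} S_{23} S_{12} \;=\; S_{23} S_{12} S_{13}
\]
in $\pure_3$. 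This is a well-known identity, corresponding to the fact that each of these three products equals the central swing generator $S_{123} = \Delta$; equivalently, it is precisely the planar relation coming from the single triangular face of $K_3$ (or, from a more elementary viewpoint, the relation that $\Delta$ commutes with each of $S_{12}$, $S_{13}$, $S_{23}$). With all relations verified, $\rho$ descends to a homomorphism $\pure_4 \to \pure_3$.

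Surjectivity is then immediate, since $\rho(a) = S_{12}$, $\rho(b) = S_{13}$, and $\rho(c) = S_{23}$ together form the standard generating set of $\pure_3$.

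There is no real obstacle here; the only mildly substantive step is recognizing that the three disjoint-edge commuting pairs in $K_4$ get collapsed to squares (hence vacuous relations), while the three triangles of $K_4$ that remain after this identification all project to the single triangle of $K_3$, giving the same relation $abc = bca = cab$ three times. Thus the content of the lemma is really a statement about the combinatorics of pairing up opposite edges in a planar embedding of $K_4$: this pairing identifies $K_4$ (with its three triangular faces sharing edges) with $K_3$ (with its one triangular face) in a way that is compatible with the swing-generator relations.
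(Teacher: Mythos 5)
Your proof is essentially the same as the paper's: both verify that the commuting relations map to trivial identities of the form $S_{ij}^2 = S_{ij}^2$ and that all three triangular relations of $\pure_4$ map to the single relation $abc = bca = cab$ of $\pure_3$, with surjectivity immediate because the images generate. The added combinatorial commentary at the end is a nice gloss but the underlying argument is identical.
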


\begin{proof}
  Since the image of this function is a generating set, the only thing
  to check is that images of the planar generators satisfy the planar
  relations.  This is clear since $a^2=a^2$, $b^2=b^2$, $c^2=c^2$
  and $abc=bca=cab$ in $\pure_3$.
\end{proof}

%%%%%%%%%%%%%%%%%%%%%%%%%%%%%%%%%%
\section{Graphs}\label{sec:graphs}
%%%%%%%%%%%%%%%%%%%%%%%%%%%%%%%%%%

In this section we record a few miscellaneous remarks related to
graphs that we use in the proof of the main result.  The first is the definition of an
auxilary graph that organizes information about a character, the
second is an elementary result from linear algebra, and the third is a
structural result about graphs that avoid a particular condition.

\begin{defn}[Graph of a character]\label{def:chi-graph}
  For each character $\chi$ of $\pure_n$ we construct a graph $K_\chi$
  that we call the \emph{graph of $\chi$}.  It is a subgraph of the
  complete graph $K_n$, it contains all vertices $v_i$ with $i$ in
  $\{1,2,\ldots, n\}$ and it contains the edge $e_{ij}$ from $v_i$ to
  $v_j$ if and only if the standard generator $S_{ij}$ survives under
  $\chi$.  When working with examples, it is convenient to add labels
  to the edges of $K_\chi$ which record the $\chi$-values of the
  corresponding standard generator.  For example, the labeled graph
  shown in Figure~\ref{fig:chi-graph} comes from a character which
  sends $S_{24}$ to $0$ and $S_{13}$ to $2$.  For any set $A \subset
  [n]$, the $\chi$-value of $S_A$ can be recovered from $K_\chi$ by
  adding up the labels on the edges with both endpoints in $A$.  Thus
  the character whose graph is shown in Figure~\ref{fig:chi-graph}
  sends $S_{124}$ to $-1$, $S_{123}$ to $0$ and $S_{1234}$ to $-3$.
\end{defn}

\begin{figure}
  \begin{tikzpicture}[scale=2.5]
    \draw[-] (1,0)--(0,1)--(0,0)--(1,0)--(1,1)--(0,1);
    \draw (0,1) node [anchor=south] {$v_1$};
    \draw (1,1) node [anchor=south] {$v_2$};
    \draw (1,0) node [anchor=north] {$v_3$};
    \draw (0,0) node [anchor=north] {$v_4$};
    \fill[color=white] (.5,0) circle (.6mm); \draw (.5,0) node {$1$};
    \fill[color=white] (.5,1) circle (.6mm); \draw (.5,1) node {$3$};
    \fill[color=white] (1,.5) circle (.6mm); \draw (1,.5) node {$-5$};
    \fill[color=white] (0,.5) circle (.6mm); \draw (0,.5) node {$-4$};
    \fill[color=white] (.5,.5) circle (.6mm); \draw (.5,.5) node {$2$};
    \foreach \x in {0,1} \foreach \y in {0,1} {\fill (\x,\y) circle (.3mm);}
  \end{tikzpicture}
  \caption{The graph of the character.\label{fig:chi-graph}}
\end{figure}
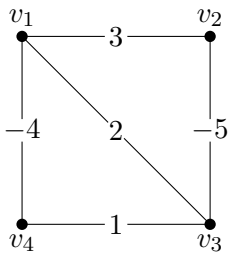

\begin{lem}[Triple sums]\label{lem:triple-sums}
  Let $\chi$ be a character for $\pure_4$.  If the four values
  $\chi(S_{123})$, $\chi(S_{124})$, $\chi(S_{134})$ and
  $\chi(S_{234})$ are all zero, then $\chi(S_{12}) = \chi(S_{34})$,
  $\chi(S_{13}) = \chi(S_{24})$, $\chi(S_{14}) = \chi(S_{23})$, and
  $\chi(S_{1234})=0$.
\end{lem}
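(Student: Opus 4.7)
The plan is to reduce the claim to a straightforward linear algebra exercise. The key fact from Definition~\ref{def:swings} is that for any subset $A \subset \{1,\dots,n\}$ of size at least $2$, the swing generator $S_A$ can be written as a product of the standard generators $S_{ij}$ with $\{i,j\} \subset A$. Since $\chi$ is a homomorphism into the abelian group $\R$, the order of the factors is irrelevant to $\chi(S_A)$ and we get the formula $\chi(S_A) = \sum_{\{i,j\} \subset A} \chi(S_{ij})$.

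Writing $a_{ij} := \chi(S_{ij})$, the four hypotheses become the linear system
\begin{align*}
a_{12} + a_{13} + a_{23} &= 0, \\
a_{12} + a_{14} + a_{24} &= 0, \\
a_{13} + a_{14} + a_{34} &= 0, \\
a_{23} + a_{24} + a_{34} &= 0.
\end{align*}
The statement that $\chi(S_{1234})=0$ is immediate: summing all four equations, each of the six quantities $a_{ij}$ appears exactly twice, giving $2\sum_{i<j} a_{ij} = 0$, and the sum $\sum_{i<j} a_{ij}$ equals $\chi(S_{1234})$ by the formula above.

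For the three pairwise equalities, I would subtract equations in complementary pairs. Subtracting the fourth equation from the first gives $a_{12} + a_{13} = a_{24} + a_{34}$, and subtracting the third from the second gives $a_{12} + a_{24} = a_{13} + a_{34}$. Adding these two consequences yields $2a_{12} = 2a_{34}$, so $a_{12} = a_{34}$. Feeding $a_{12} = a_{34}$ back into $a_{12} + a_{13} = a_{24} + a_{34}$ gives $a_{13} = a_{24}$. Substituting both of these into the first equation minus the second, $a_{13} + a_{23} = a_{14} + a_{24}$, yields $a_{14} = a_{23}$. All three equalities of the statement follow.

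There is no real obstacle here; the only thing to verify carefully is the abelianized factorization $\chi(S_A) = \sum_{\{i,j\}\subset A} \chi(S_{ij})$, which is exactly the content of the remark about $S_A$ in Definition~\ref{def:swings} combined with the fact that $\chi$ factors through the abelianization.
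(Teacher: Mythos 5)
Your proof is correct and follows essentially the same path as the paper's: both expand each $\chi(S_{ijk})$ as a sum of the $\chi(S_{ij})$, sum all four equations to get $\chi(S_{1234})=0$, and then take linear combinations of the triangle equations to extract the pairwise equalities. The paper packages the second step slightly more compactly (add two triangle equations and subtract the six-edge sum in one stroke), but this is the same elementary linear algebra.
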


\begin{proof}
  The proof is elementary linear algebra.  If we expand the four given
  values as sums over the edges of $K_\chi$ and then add all four
  equations together, we find that twice the sum over all six edges is
  zero.  If we then add two of the triangle equations and subtract the
  sum of six edges, we find that difference in the values of $\chi$ on
  a pair of disjoint edges is zero.  In other words, their values are
  equal.  This completes the proof.
\end{proof}

And finally, we consider a condition on a graph that arises in
Section~\ref{sec:invariant} and which has quite strong structural
implications.

\begin{lem}[Star or small]\label{lem:star-small}
  Let $\Gamma$ be a graph with no isolated vertices.  If $\Gamma$ does
  not contain an edge disjoint from two other edges, then all edges of
  $\Gamma$ have an endpoint in common, or they collectively have at
  most $4$ endpoints.  In other words, $\Gamma$ is a star or a
  subgraph of $K_4$.
\end{lem}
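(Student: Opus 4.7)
The plan is a short case analysis exploiting the contrapositive: assume $\Gamma$ is not a star and has at least $5$ endpoints, and produce an edge disjoint from two others. Since no isolated vertices are present, every vertex is an endpoint, so ``at most $4$ endpoints'' is the same as ``at most $4$ vertices.''

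First, I would note that $\Gamma$ being \emph{not} a star means precisely that $\Gamma$ contains two disjoint edges: if every pair of edges shared a common endpoint and there were at least two edges, a standard pigeonhole argument forces a common vertex shared by all of them (three pairwise-intersecting edges in a simple graph must either share a vertex or form a triangle, but a triangle is a subgraph of $K_4$, which is the other permitted outcome). So I would split off the case that $\Gamma$ is a star, and in the remaining case fix two disjoint edges $e_1 = \{a,b\}$ and $e_2 = \{c,d\}$, which together involve exactly $4$ vertices.

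Now suppose toward a contradiction that $\Gamma$ has a fifth vertex $v$. Since $\Gamma$ has no isolated vertices, there is an edge $e_3$ incident to $v$, and its other endpoint is either one of $a,b,c,d$ or a further new vertex $w$. I would handle these in two cases:
\begin{itemize}
\item If the other endpoint of $e_3$ is $a$ or $b$, then $e_3$ shares no vertex with $e_2 = \{c,d\}$, so $e_2$ is disjoint from both $e_1$ and $e_3$.
\item If the other endpoint of $e_3$ is $c$ or $d$, then symmetrically $e_1$ is disjoint from both $e_2$ and $e_3$.
\item If the other endpoint of $e_3$ is a new vertex $w$, then $e_3$ is disjoint from both $e_1$ and $e_2$, so either $e_1$ or $e_2$ is disjoint from two other edges.
\end{itemize}
In every case $\Gamma$ contains an edge disjoint from two other edges, contradicting the hypothesis; hence $\Gamma$ has at most $4$ vertices.

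The only genuine subtlety is the initial reduction ``not a star $\Rightarrow$ contains two disjoint edges,'' but this is straightforward in simple graphs: if every two edges met, any third edge meeting both of a pair $\{uv, uw\}$ at distinct vertices would be $vw$, completing a triangle inside $4$ vertices, which is already the conclusion. So the argument is essentially elementary and the main obstacle is just organizing the case split cleanly.
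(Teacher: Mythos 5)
Your proof is correct, but it takes a genuinely different route from the paper. The paper argues by cases on the maximum valence of a vertex of $\Gamma$: a vertex of valence at least $4$ forces a star, a vertex of valence exactly $3$ forces at most four vertices, and maximum valence at most $2$ reduces $\Gamma$ to a short disjoint union of paths and cycles, which are then checked directly. You instead work from the contrapositive via a well-chosen pair of disjoint edges: once $\Gamma$ is neither a star nor a triangle it must contain two disjoint edges $e_1,e_2$ spanning four vertices, and then any fifth (non-isolated) vertex carries an edge $e_3$ that, in each of your three subcases, leaves one of $e_1,e_2,e_3$ disjoint from the other two. Your argument is shorter and has essentially one case split, at the cost of invoking the classical fact that an intersecting family of edges in a simple graph is a star or a triangle; the paper's valence-based casework is more self-contained and as a side effect lists the possible graphs explicitly (paths of length at most three, cycles of length at most four, two disjoint edges), which is not needed for the lemma but matches the later figures. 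One small presentational caution: your opening sentence, that ``not a star means precisely two disjoint edges,'' is stated as an equivalence but is not one (a triangle is the counterexample); you do acknowledge and dispose of the triangle immediately, so the logic is sound, but it would be cleaner to phrase the trichotomy up front as: $\Gamma$ is a star, or $\Gamma$ is a triangle, or $\Gamma$ has two disjoint edges.
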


\begin{proof}
  If $\Gamma$ has a vertex $v$ of valence more than $3$, then edges
  ending at $v$ are its only edges.  Otherwise, the additional edge is
  disjoint from at least two of the edges with $v$ as an endpoint,
  contradicting our assumption.  If $\Gamma$ has a vertex of valence
  $3$, then these four vertices are the only vertices of $\Gamma$.
  Otherwise, there is an edge with only one endpoint in this set, it
  must end at one of the vertices other than $v$ (since the edges
  ending at $v$ are already accounted for) and thus it avoids two of
  the edges with $v$ as an endpoint, contradiction.  Finally, if
  $\Gamma$ only has vertices of valence $1$ and $2$, it is a
  collection of disjoint paths and cycles.  If there is a cycle, then
  there can be no other components and the cycle must have length at
  most $4$.  If there are multiple paths, there can only be two and
  they must both consists of a single edge.  If there is only one
  path, it can have at most $3$ edges.  This completes the proof
  since a cycle of length at most $4$, two paths of length $1$ and a
  single path of length $3$ are all subgraphs of $K_4$.
\end{proof}

%%%%%%%%%%%%%%%%%%%%%%%%%%%%%%%%%%%%%%%%%%%%%%%%%%%%%%%%%%%%
\section{Characters in the Complement}\label{sec:complement}
%%%%%%%%%%%%%%%%%%%%%%%%%%%%%%%%%%%%%%%%%%%%%%%%%%%%%%%%%%%%

In this section we recall the BNS-invariants for $\pure_2$ and
$\pure_3$ and then use the various epimorphisms between pure braid
groups to produce a series of circles in the complement of
$\Sigma^1(\pure_n)$ that we call $\pure_3$-circles and
$\pure_4$-circles.  Since $\pure_2$ is abelian, its invariant is
trivial to compute by Corollary~\ref{cor:center}.

\begin{lem}[$2$ points]\label{lem:p2}
  The group $\pure_2$ is isomorphic to $\Z$, its character sphere is
  $\sph^0$, the set $\Sigma^1(\pure_2)^c$ is empty and
  $\Sigma^1(\pure_2)$ includes both points.
\end{lem}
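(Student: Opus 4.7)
The plan is to verify each of the four assertions in turn, using the topological description of $\pure_n$ from Section~\ref{sec:pure} and the general tools already established, especially Corollary~\ref{cor:center}.

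First I would identify $\pure_2$ as a group. By Definition~\ref{def:braid-arrangement}, $\pure_2 = \pi_1(\C^2 \setminus H_{12})$ where $H_{12} = \{(z_1,z_2) : z_1 = z_2\}$. The linear change of coordinates $(z_1,z_2) \mapsto (z_1+z_2, z_1-z_2)$ shows that the complement is homotopy equivalent to $\C \times \C^*$, hence to $S^1$, so $\pure_2 \cong \Z$ with the swing generator $S_{12}$ corresponding to a generator. (Alternatively, this is immediate from the fact that the standard generating set of Definition~\ref{def:swings} consists of a single element $S_{12}$ and there are no relations to impose, since the only relations in any presentation of $\pure_n$ become trivial after abelianization and for $n=2$ this already determines the group.)

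Next I would compute the character sphere. Since $\pure_2 \cong \Z$ is already abelian, Lemma~\ref{lem:pn-characters} gives $\Hom(\pure_2,\R) \cong \R^{\binom{2}{2}} = \R$, so the character sphere is $S(\pure_2) = \sph^{\binom{2}{2}-1} = \sph^0$, which consists of exactly two points, represented by $\chi_+$ and $\chi_-$ with $\chi_\pm(S_{12}) = \pm 1$.

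Finally I would apply Corollary~\ref{cor:center}. Because $\pure_2$ is abelian, its center is all of $\pure_2$. For each of the two character classes $[\chi_\pm] \in S(\pure_2)$, the character is nonzero on the generator $S_{12}$, hence not identically zero on the center. Corollary~\ref{cor:center} then forces $[\chi_\pm] \in \Sigma^1(\pure_2)$, so $\Sigma^1(\pure_2)$ contains both points of $\sph^0$ and $\Sigma^1(\pure_2)^c = \emptyset$.

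There is really no main obstacle here: each clause follows directly from a tool already in hand (the topology of the braid arrangement for $n=2$, Lemma~\ref{lem:pn-characters} for the character sphere, and Corollary~\ref{cor:center} for membership in $\Sigma^1$). The lemma is serving as the base case for the inductive/structural arguments to follow in Section~\ref{sec:complement}, so the point is simply to record it cleanly.
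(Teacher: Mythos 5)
Your proof is correct and matches the paper's (unstated, one-line) argument: the paper simply remarks ``Since $\pure_2$ is abelian, its invariant is trivial to compute by Corollary~\ref{cor:center}'' and states the lemma without a separate proof environment. You have merely expanded this into explicit detail, identifying $\pure_2 \cong \Z$, invoking Lemma~\ref{lem:pn-characters} for the character sphere, and applying Corollary~\ref{cor:center}, which is exactly the intended route.
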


The group $\pure_3$ is only slightly more complicated.

\begin{lem}[$3$ points]\label{lem:p3}
  The group $\pure_3$ is isomorphic to $\F_2 \times \Z$, its character
  sphere is $\sph^2$, the set $\Sigma^1(\pure_3)^c$ is the equatorial
  circle defined by $\chi(\Delta) = 0$ and $\Sigma^1(\pure_3)$ is the
  complement of this circle.
\end{lem}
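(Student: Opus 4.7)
My plan is to first establish the classical decomposition $\pure_3\cong F_2\times\Z$ and then handle the complementary regions $\chi(\Delta)\neq 0$ and $\chi(\Delta)=0$ of the character sphere separately using the two techniques developed in Section~\ref{sec:bns}.

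For the decomposition, the Fadell--Neuwirth fibration gives a short exact sequence whose kernel $K=\langle S_{13},S_{23}\rangle$ is free of rank $2$ and whose quotient $\pure_2\cong\Z$ is realized by the forgetting map $\phi_{12}$. Since $\phi_{12}(\Delta)=\phi_{12}(S_{12}S_{13}S_{23})=S_{12}$ generates $\pure_2$, the cyclic subgroup $\langle\Delta\rangle$ intersects $K$ trivially and together they generate $\pure_3$. Because $\Delta$ is central, the resulting factorization is a direct product $\pure_3\cong F_2\times\Z$, so the abelianization is free of rank $3$ and $S(\pure_3)=\sph^2$.

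For characters with $\chi(\Delta)\neq 0$, the central element $\Delta$ survives under $\chi$, and Corollary~\ref{cor:center} immediately gives $[\chi]\in\Sigma^1(\pure_3)$; this accounts for the two open hemispheres. For characters with $\chi(\Delta)=0$, the character is the pullback of a non-zero character $\psi$ of the free factor along the projection $\pure_3\onto F_2$. Since the commutator subgroup of $F_2$ is not finitely generated, $\Sigma^1(F_2)$ is empty, so $[\psi]\in\Sigma^1(F_2)^c$ and Lemma~\ref{lem:epis} yields $[\chi]\in\Sigma^1(\pure_3)^c$. The entire equator $\chi(\Delta)=0$ therefore lies in the complement, completing the description.

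The only real subtlety is producing the direct product decomposition cleanly; once that is in hand, the BNS computation is an immediate application of the tools from Section~\ref{sec:bns}, with the $\R$-tree perspective of Lemma~\ref{lem:brown} hidden inside the (standard) identification $\Sigma^1(F_2)=\emptyset$.
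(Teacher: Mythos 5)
Your proof is correct and follows the same overall strategy as the paper: establish $\pure_3\cong\F_2\times\Z$ with $\Delta$ generating the $\Z$-factor, use Corollary~\ref{cor:center} to place the open hemispheres $\chi(\Delta)\neq 0$ in the invariant, and use the projection to $\F_2$ together with $\Sigma^1(\F_2)=\emptyset$ and Lemma~\ref{lem:epis} to place the equator in the complement. The only genuine difference is how the direct-product decomposition is produced. The paper does it by a Tietze move on the Artin presentation $\langle a,b,c\mid abc=bca=cab\rangle$: introduce $d=\Delta$, eliminate $c$ via $c=(ab)^{-1}d$, and read off $\langle a,b,d\mid ad=da,\ bd=db\rangle\cong\F_2\times\Z$. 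You instead use the Fadell--Neuwirth exact sequence $1\to K\to\pure_3\xrightarrow{\phi_{12}}\pure_2\to 1$ with $K=\langle S_{13},S_{23}\rangle$ free of rank $2$, observe that $\langle\Delta\rangle$ is a central complement to $K$ since $\phi_{12}(\Delta)=S_{12}$ generates $\pure_2$, and conclude $\pure_3\cong K\times\langle\Delta\rangle$. Both are standard routes to the same fact: yours imports the fibration sequence (which the paper never states), while the paper's is self-contained given the Artin presentation it already quotes; the presentation-based argument is arguably closer in spirit to the rest of the paper, which works throughout with the swing generators and their relations, but your version makes the identification of the two direct factors with concrete subgroups somewhat more transparent.
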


\begin{proof}
  The standard presentation for $\pure_3$ is $\langle a,b,c \mid
  abc=bca=cab \rangle$ where $a=S_{12}$, $b=S_{13}$ and $c=S_{23}$.
  If we add $d=\Delta=S_{123}$ as a generator and use the equation
  $abc=d$ to eliminate $c$ we obtain the following alternative
  presentation: $\pure_3 \cong \langle a,b,d \mid ad=da,\ bd=db
  \rangle$, from which it is clear that $\pure_3 \cong \F_2 \times \Z$
  with the free group $\F_2$ generated by $a$ and $b$ and the central
  $\Z$ generated by $d = \Delta$.  By Corollary~\ref{cor:center}
  characters in $\Sigma^1(\pure_3)^c$ must send $\Delta$ to zero.  On
  the other hand, those which do send $\Delta$ to zero are really
  characters of $\F_2$ and it is well-known that the BNS-invariant for
  a free group is empty.  Thus $[\chi] \in \Sigma^1(\pure_3)^c$ if and
  only if $\chi(\Delta) = 0$.
\end{proof}

It is the circle of characters that forms the complement of
$\Sigma^1(\pure_3)$ which produces multiple circles in the complement
of $\Sigma^1(\pure_n)$ for $n >3$.

\begin{defn}[$\pure_3$-circles and  $\pure_4$-circles]
  We say that $\chi$ is part of a \emph{$\pure_3$-circle} if there
  exists natural projection map $\phi_{ijk}$ (described in Definition~\ref{def:phi_A})
 and a character $\psi$ where $[\psi] \in \Sigma^1(\pure_3)^c$, such that 
$\chi = \psi \circ \phi_{ijk}$:
  \begin{equation*} \pure_n \stackrel{\phi_{ijk}}{\longrightarrow} \pure_3
  \stackrel{\psi}{\longrightarrow} \R .
  \end{equation*}
  \noindent
More concretely, $\chi$ is
  part of a $\pure_3$-circle if and only if all the endpoints of edges
  in $K_\chi$ belong to a three element subset $\{v_i,v_j,v_k\}$ and
  the value of $\chi(S_{ijk})$ is zero.  
  
In a similar fashion we say
  that $\chi$ is part of a \emph{$\pure_4$-circle} if there exists a
  triple of maps:
  \begin{equation*} 
    \pure_n \stackrel{\phi_{ijkl}}{\longrightarrow} \pure_4
    \stackrel{\rho}{\longrightarrow} \pure_3
  \stackrel{\psi}{\longrightarrow} \R 
  \end{equation*}
  \noindent
  whose composition is $\chi$ where $\phi_{ijkl}$ is one of the
  natural projection maps, 
  $\rho$ is the unusual map described in Lemma~\ref{lem:rho} and
  $[\psi] \in \Sigma^1(\pure_3)^c$.  More concretely,
  $\chi$ is part of a $\pure_4$-circle if and only if all the
  endpoints of edges in $K_\chi$ belong to a four element subset
  $\{v_i,v_j,v_k,v_l\}$, the equations $\chi(S_{ij}) = \chi(S_{kl})$,
  $\chi(S_{ik}) =\chi(S_{jl})$, $\chi(S_{il})=\chi(S_{jk})$ hold and
  the sum of these three shared values is zero.
\end{defn}

Using Lemma~\ref{lem:epis} we immediately conclude the following:

\begin{thm}[Characters in the complement]\label{thm:complement}
  Let $\chi$ be a character of $\pure_n$.  If $\chi$ is a part of a
  $\pure_3$-circle or a $\pure_4$-circle, then $[\chi] \in
  \Sigma^1(\pure_n)^c$.  This produces $\binom{n}{3} + \binom{n}{4}$
  circles in the complement.
\end{thm}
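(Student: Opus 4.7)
My plan is to observe that the first assertion is an essentially immediate application of Lemma~\ref{lem:epis} combined with Lemma~\ref{lem:p3}, and then to handle the counting as a short dimension computation.

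For the first assertion, suppose $\chi$ is part of a $\pure_3$-circle so that $\chi = \psi \circ \phi_{ijk}$ with $[\psi] \in \Sigma^1(\pure_3)^c$. Since $\phi_{ijk}$ is an epimorphism, Lemma~\ref{lem:epis} yields $[\chi] \in \Sigma^1(\pure_n)^c$. Similarly, if $\chi$ is part of a $\pure_4$-circle, then $\chi = \psi \circ \rho \circ \phi_{ijkl}$ is the composition of two epimorphisms (by Definition~\ref{def:phi_A} and Lemma~\ref{lem:rho}), followed by a character, and a second application of Lemma~\ref{lem:epis} to the composite $\rho \circ \phi_{ijkl}$ gives $[\chi] \in \Sigma^1(\pure_n)^c$. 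The equivalence of the concrete description with the map-theoretic description of each circle follows by tracking generators: for a $\pure_3$-circle the only surviving $S_{ab}$ have $\{a,b\} \subset \{i,j,k\}$ and $\chi(S_{ijk}) = \psi(\Delta) = 0$, and for a $\pure_4$-circle the three pairing equations come from $\rho$ identifying disjoint edges in $K_4$, while the sum-to-zero condition again records $\psi(\Delta) = 0$.

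For the count I would verify that each construction produces a genuine circle in $S(\pure_n)$. A $\pure_3$-circle from $\{i,j,k\}$ is cut out from $\R^{\binom{n}{2}}$ by imposing that all coordinates outside $\{S_{ij}, S_{ik}, S_{jk}\}$ vanish together with the single linear equation $\chi(S_{ij}) + \chi(S_{ik}) + \chi(S_{jk}) = 0$, leaving a $2$-dimensional subspace whose unit sphere is indeed a circle. A $\pure_4$-circle from $\{i,j,k,l\}$ imposes vanishing on the coordinates outside the six edges of the $4$-clique together with the three pairing equations (which are linearly independent) and the additional sum-to-zero condition; this also cuts down to a $2$-dimensional subspace, hence a circle. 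Note that the map $\rho$ is canonical in the sense that $K_4$ admits a unique $1$-factorization into three pairs of disjoint edges, so each $4$-subset produces exactly one $\pure_4$-circle.

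Finally, I would verify that the $\binom{n}{3} + \binom{n}{4}$ circles so constructed are pairwise distinct. Two $\pure_3$-circles on distinct triples $A \neq A'$ are distinct because a nonzero character supported on $A \cap A'$ (of size $\leq 2$) with the sum-to-zero condition on both triples would force all surviving values to vanish. A similar argument separates two $\pure_4$-circles, and to separate a $\pure_3$-circle on $\{i,j,k\}$ from a $\pure_4$-circle on $\{i,j,k,l\}$ one notes that a common nonzero character would have $\chi(S_{il}) = \chi(S_{jl}) = \chi(S_{kl}) = 0$, whereupon the pairing equations for the $\pure_4$-circle force all of $\chi(S_{ij}), \chi(S_{ik}), \chi(S_{jk})$ to vanish, a contradiction. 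There is no real obstacle in this proof; the work is already done by Lemmas~\ref{lem:epis}, \ref{lem:p3}, and~\ref{lem:rho}, and the only thing to be careful about is the canonicity of the $1$-factorization of $K_4$ which ensures that the parenthetical ``$\binom{n}{4}$'' (rather than some larger multiple) is correct.
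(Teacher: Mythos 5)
Your proof is correct and follows the same basic route as the paper: the first assertion is an immediate application of Lemma~\ref{lem:epis} (since $\phi_{ijk}$ and $\rho\circ\phi_{ijkl}$ are epimorphisms and the source characters lie in $\Sigma^1(\pure_3)^c$), and the count comes from the number of natural projections. The paper's own proof is essentially just this observation and stops; you go further in two ways that are worth noting. First, you verify that each construction actually cuts out a circle by a dimension count: the vanishing conditions outside a $3$-subset (resp.\ $4$-subset) together with the sum-to-zero condition (resp.\ the three pairing equations plus the sum-to-zero condition) reduce $\R^{\binom{n}{2}}$ to a $2$-dimensional subspace. Second, you check that the $\binom{n}{3}+\binom{n}{4}$ circles are pairwise distinct by showing that any character on the intersection of two of the defining subspaces is forced to vanish. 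Both checks are correct, and the observation that $K_4$ has a unique $1$-factorization is exactly what guarantees one $\pure_4$-circle per $4$-subset. These details are left implicit in the paper, so your proposal is a slightly more careful version of the same argument rather than a different one.
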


\begin{proof}
  The definitions of $\pure_3$-circles and $\pure_4$-circles ensure
  that Lemma~\ref{lem:epis} may be applied to $\chi$ to complete the
  proof.  The second assertion comes from the number of natural
  projections onto $3$ points plus the number of natural
  projections onto $4$ points.
\end{proof}

%%%%%%%%%%%%%%%%%%%%%%%%%%%%%%%%%%%%%%%%%%%%%%%%%%%%%%%%%%
\section{Characters in the invariant}\label{sec:invariant}
%%%%%%%%%%%%%%%%%%%%%%%%%%%%%%%%%%%%%%%%%%%%%%%%%%%%%%%%%%

In this final section we show that every character of $\pure_n$ that
is not part of a $\pure_3$-circle or a $\pure_4$-circle is in
$\Sigma^1(\pure_n)$.  We begin with a series of lemmas which follow,
directly or indirectly, from Lemma~\ref{lem:condom}.

\begin{lem}[Zero sum]\label{lem:zero-sum}
  If $\chi$ is a character of $\pure_n$ and $\chi(\Delta)$ is not
  zero, then \good.
\end{lem}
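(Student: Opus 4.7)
The plan is to derive this directly from Corollary~\ref{cor:center} (Central elements). Recall that in the Commuting swings remark it was noted that $\Delta = S_{\{1,2,\ldots,n\}}$ is central in $\pure_n$ and in fact generates the center. Hence the hypothesis $\chi(\Delta) \neq 0$ says precisely that $\chi$ is not identically zero on the center of $\pure_n$.

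Given this, Corollary~\ref{cor:center} applies immediately with $G = \pure_n$ and the distinguished central element $\Delta$, and yields $[\chi] \in \Sigma^1(\pure_n)$. If I wanted to spell out the underlying mechanism rather than quote the corollary as a black box, I would take $I$ to be the standard generating set $\{S_{ij}\}$ and $J = \{\Delta\}$, and apply Lemma~\ref{lem:condom}: $J$ survives under $\chi$ by hypothesis, the commuting graph $C(J)$ has a single vertex and is trivially connected, and $J$ dominates $I$ because $\Delta$ is central and therefore commutes with every standard generator.

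There is essentially no obstacle here; the only thing worth recording is the identification of $\Delta$ as central, which was already established in Section~\ref{sec:pure}. So the lemma is a one-line consequence of earlier material, and its role in the paper is mainly to set up notation and to serve as a convenient reference in the subsequent reduction lemmas, where one will repeatedly want to dispose of the case $\chi(\Delta) \neq 0$ immediately.
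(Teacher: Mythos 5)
Your proof is correct and matches the paper's own argument exactly: the paper likewise observes that $\Delta$ is central and cites Corollary~\ref{cor:center}. The additional unpacking via Lemma~\ref{lem:condom} is accurate but not needed, as it is already the content of the corollary's proof.
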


\begin{proof}
  Since $\Delta$ is central in $\pure_n$, this follows from
  Corollary~\ref{cor:center}.
\end{proof}

\begin{lem}[Disjoint triple]\label{lem:3disjont}
  If $\chi$ is a character of $\pure_n$ and $K_\chi$ contains three
  pairwise disjoint edges, then \good.
\end{lem}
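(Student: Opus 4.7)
The plan is to apply Lemma~\ref{lem:condom} with $I$ the standard generating set of $\pure_n$ and $J$ consisting of the swing generators corresponding to three well-chosen disjoint edges of $K_\chi$.

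First, using Corollary~\ref{cor:auts} together with the symmetric-group relabeling automorphisms of $\pure_n$ described before Definition~\ref{def:phi_A}, I would reduce to the case where the three pairwise disjoint edges of $K_\chi$ are precisely $\{p_1,p_2\}$, $\{p_3,p_4\}$, and $\{p_5,p_6\}$, with $p_1,\ldots,p_n$ in the standard clockwise convex position.  Since $\Sigma^1(\pure_n)$ is preserved by automorphisms and the relabeling simply permutes which pairs of points are joined by standard generators surviving under $\chi$, this reduction is free of cost.

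Next I set $J = \{S_{12},\,S_{34},\,S_{56}\}$.  Every element of $J$ survives under $\chi$ by hypothesis, and $I$ generates $\pure_n$ by Definition~\ref{def:swings}.  The key geometric observation is that a chord between two cyclically adjacent points in convex position is crossed by no other chord, since the short arc between such a pair contains no other vertex.  Combined with the commutation rule for swing generators (two $S_{ij}$'s commute iff their index sets are disjoint and their chords do not meet), this shows that $S_{ij}$ commutes with $S_{ab}$ whenever $\{i,j\}$ is an adjacent pair in convex position and $\{i,j\}\cap\{a,b\}=\emptyset$.  In particular, the three elements of $J$ pairwise commute, so $C(J)$ is the complete graph on three vertices and hence connected.

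Finally, every $2$-element subset $\{a,b\} \subset \{1,\ldots,n\}$ can meet at most two of the three pairwise disjoint pairs $\{1,2\}$, $\{3,4\}$, $\{5,6\}$, so at least one of them is disjoint from $\{a,b\}$, and the short-chord commutation principle then produces an element of $J$ commuting with $S_{ab}$.  Hence $J$ dominates $I$, and all four hypotheses of Lemma~\ref{lem:condom} are satisfied, giving $[\chi] \in \Sigma^1(\pure_n)$.  The only step that demands any care is verifying that the symmetric-group action really realizes an arbitrary relabeling of the six endpoints into consecutive convex positions and that the short-chord observation applies; once these are confirmed, the remaining verifications of the hypotheses of Lemma~\ref{lem:condom} are essentially immediate.
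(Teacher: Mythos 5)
Your proposal is correct and takes essentially the same approach as the paper: permute so the three disjoint edges are $e_{12}, e_{34}, e_{56}$, take $J=\{S_{12},S_{34},S_{56}\}$ and $I$ the full standard generating set, observe that $C(J)$ is a triangle and that $J$ dominates $I$, then invoke Lemma~\ref{lem:condom}. You simply spell out the convex-position/non-crossing justification that the paper leaves implicit.
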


\begin{proof}
  First permute the points so that $e_{12}$, $e_{34}$ and $e_{56}$
  are edges in $K_\chi$.  Then let $J = \{S_{12}, S_{34}, S_{56}\}$
  and let $I$ be the full standard generating set for this
  arrangement.  The graph $C(J)$ is a triangle and $J$ dominates $I$
  because every standard generator commutes with at least one element in $J$.
  Lemma~\ref{lem:condom} completes the proof.
\end{proof}

\begin{lem}[Disjoint from a pair]\label{lem:disjoint-pair}
  If $\chi$ is a character of $\pure_n$ and $K_\chi$ contains an edge
  disjoint from two other edges, then \good.
\end{lem}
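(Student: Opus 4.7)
My plan is to split into two cases according to whether the two edges disjoint from the distinguished edge also intersect each other. Using Corollary~\ref{cor:auts} together with the relabeling discussion from Section~\ref{sec:pure}, I may assume the distinguished edge is $e_{12}$, so the two other edges of $K_\chi$ have both endpoints in $\{3,4,\ldots,n\}$.

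In the first case, if the two other edges are disjoint from each other, then $K_\chi$ contains three pairwise disjoint edges and Lemma~\ref{lem:3disjont} applies directly. Otherwise the two other edges share a vertex; after a further relabeling they are $e_{34}$ and $e_{35}$, so in particular $n\ge 5$. My first choice is $J=\{S_{12},S_{34},S_{35}\}$, which clearly survives under $\chi$ and whose commuting graph is the path $S_{34}-S_{12}-S_{35}$ (the middle element commutes with the outer two by disjointness, while the outer two share the vertex $3$ and so do not commute). The snag is that $J$ fails to dominate the standard generators $S_{13}$ and $S_{23}$: the pair $\{1,3\}$ meets each of $\{1,2\},\{3,4\},\{3,5\}$, and similarly for $\{2,3\}$.

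My fix is to modify the generating set rather than enlarge $J$. I take
\[
I \;=\; \bigl(\{S_{ij}:1\le i<j\le n\}\setminus\{S_{13},S_{23}\}\bigr)\cup\{S_{134},S_{235}\}.
\]
The $\pure_3$-subpresentations on $\{1,3,4\}$ and $\{2,3,5\}$ give the identities $S_{134}=S_{13}S_{14}S_{34}$ and $S_{235}=S_{23}S_{25}S_{35}$, which allow us to solve for $S_{13}$ and $S_{23}$ inside the group generated by $I$, so $I$ still generates $\pure_n$. By containment, $S_{134}$ commutes with $S_{34}\in J$ and $S_{235}$ commutes with $S_{35}\in J$. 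Every remaining element of $I$ is a standard generator $S_{ij}$ with $\{i,j\}\ne\{1,3\},\{2,3\}$, and a quick check shows that any such pair avoids at least one of the three sets $\{1,2\}$, $\{3,4\}$, $\{3,5\}$, so $S_{ij}$ commutes with at least one of $S_{12},S_{34},S_{35}$. Lemma~\ref{lem:condom} then yields $[\chi]\in\Sigma^1(\pure_n)$.

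The main obstacle is recognizing that in Case B it is the generating set $I$, not the dominating set $J$, that must be modified. A na\"ive attempt to augment $J$ with a surviving element commuting with $S_{13}$ can fail outright: for carefully chosen $\chi$ every swing $S_A$ with $\{1,3\}\subset A$ or $A\cap\{1,3\}=\emptyset$ happens to satisfy $\chi(S_A)=0$, so the entire centralizer-of-$S_{13}$ calculation inside $\pure_n$ produces no surviving element. Passing to the non-standard $I$ above sidesteps the obstruction and keeps Lemma~\ref{lem:condom} directly applicable.
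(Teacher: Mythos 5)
Your proof is correct and takes essentially the same approach as the paper's: split off the case of three pairwise disjoint edges to Lemma~\ref{lem:3disjont}, then in the remaining case take $J$ to be the three surviving swings on the given edges, replace the two offending standard generators (the ones meeting all three sets of $J$) by larger swing generators $S_{ijk}$ that commute with an element of $J$, and invoke Lemma~\ref{lem:condom}. The only cosmetic difference is a relabeling: you put the shared vertex at $3$ (edges $e_{34},e_{35}$, replacing $S_{13},S_{23}$ by $S_{134},S_{235}$), whereas the paper puts it at $4$ (edges $e_{34},e_{45}$, replacing $S_{14},S_{24}$ by $S_{145},S_{245}$).
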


\begin{proof}
  If all three edges are disjoint then Lemma~\ref{lem:3disjont}
  applies.  Otherwise, permute the points so that $e_{12}$,
  $e_{34}$ and $e_{45}$ are edges in $K_\chi$.  Then let $J =
  \{S_{12},S_{34},S_{45}\}$ and let $I$ be a modification of the
  standard generating set for this arrangement where $S_{14}$ and $S_{24}$ are
  removed and $S_{145}$ and $S_{245}$ are added in their place.  This remains a
  generating set because $S_{145} = S_{14}S_{15}S_{45}$ so that
  $S_{14}$ can be recovered from the other three, and likewise, $S_{245} = S_{24}S_{25}S_{45}$ so $S_{24}$ can be recovered.  The graph $C(J)$ is
  connected since both $S_{34}$ and $S_{45}$ commute with $S_{12}$.
  Since every standard generator (with the exception of $S_{14}$ and $S_{24}$)
  commutes with some element in $J$, and $S_{145}$ and $S_{245}$ commute with
  $S_{45}$, $J$ dominates $I$.  Lemma~\ref{lem:condom} completes the
  proof.
\end{proof}

At this point, the Star-or-Small Lemma, Lemma~\ref{lem:star-small},
 implies that our search for
characters in $\Sigma^1(\pure_n)^c$ can be restricted to those whose
graph is a star or a subgraph of $K_4$, plus possibly some isolated 
vertices.

\begin{lem}[Stars]\label{lem:star}
  If $\chi$ is a character of $\pure_n$ and the edges of $K_\chi$ form
  a star with at least $3$ edges, then \good.
\end{lem}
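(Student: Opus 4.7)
The plan is to apply Lemma~\ref{lem:condom}. By Corollary~\ref{cor:auts} we may relabel the points so that the star center is $v_1$ and three of its leaves are $v_2, v_3, v_4$; setting $a_k = \chi(S_{1k})$ for $k = 2, 3, 4$, the values $a_2, a_3, a_4$ are nonzero and $\chi(S_{ij}) = 0$ for every non-star edge. The argument then splits on whether $\chi(\Delta)$ vanishes.

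If $\chi(\Delta) \neq 0$, Lemma~\ref{lem:zero-sum} gives $[\chi] \in \Sigma^1(\pure_n)$ immediately.

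Otherwise $\chi(\Delta) = 0$. For $k \in \{2, 3, 4\}$ let $W_k = \{1, 2, \ldots, n\} \setminus \{k\}$ and take
\[
J = \{\, S_{12},\; S_{13},\; S_{14},\; S_{W_2},\; S_{W_3},\; S_{W_4} \,\},
\]
with $I$ the standard generating set of $\pure_n$. I would verify the three hypotheses of Lemma~\ref{lem:condom} in turn: (i) every element of $J$ survives, since the only edges contributing to $\chi(S_{W_k})$ are star edges $S_{1l}$ with $l \neq k$, giving $\chi(S_{W_k}) = \chi(\Delta) - a_k = -a_k \neq 0$; (ii) for distinct $j, k \in \{2, 3, 4\}$ the inclusion $\{1, j\} \subset W_k$ forces $S_{1j}$ and $S_{W_k}$ to commute, so $C(J)$ contains the bipartite graph $K_{3,3}$ minus a perfect matching, which is a connected $6$-cycle; (iii) for any pair $\{i, j\}$ there exists $k \in \{2, 3, 4\} \setminus \{i, j\}$, and then $\{i, j\} \subset W_k$ shows $S_{ij}$ commutes with $S_{W_k}$, so $J$ dominates $I$.

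The main obstacle is the case $\chi(\Delta) = 0$: the center is now killed by $\chi$, so the one-element $J$ used in Lemma~\ref{lem:zero-sum} is unavailable. The resolution is to replace $\Delta$ by the three $(n-1)$-element subsets $W_2, W_3, W_4$. Each $S_{W_k}$ survives because deleting a single star leaf from $\Delta$ disturbs the vanishing sum $\sum a_l$ by exactly one nonzero summand, and the three of them together simultaneously solve both remaining problems: connectivity (each $W_k$ contains two of $\{1,2\}, \{1,3\}, \{1,4\}$ by set containment, and these inclusions weave the six elements of $J$ into a hexagon) and domination (every two-element subset of $\{1, \ldots, n\}$ necessarily misses at least one element of $\{2, 3, 4\}$).
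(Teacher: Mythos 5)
Your proof is correct and takes essentially the same approach as the paper's: after dispatching the case $\chi(\Delta)\neq 0$ via Lemma~\ref{lem:zero-sum}, you apply Lemma~\ref{lem:condom} with the same six-element set $J$ of three star edges from the center together with the three complementary $(n-1)$-point swings, yielding the same hexagonal $C(J)$ and the same domination argument. The only difference from the paper is a relabeling (you place the star's center at $v_1$ with leaves $v_2,v_3,v_4$; the paper uses center $v_4$ with leaves $v_1,v_2,v_3$).
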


\begin{proof}
  If $\chi(\Delta)$ is not zero, then \good\ by
  Lemma~\ref{lem:zero-sum}.  Otherwise, permute the points so that
  $v_1$, $v_2$ and $v_3$ are leafs of $K_\chi$ and $v_4$ is the vertex
  all edges have in common.  Then let $I$ be the standard generators
  for this arrangement and let $J$ consist of the six elements
  $S_{14}$, $S_{24}$, $S_{34}$, $S_{A_1}$, $S_{A_2}$, and $S_{A_3}$
  where $A_i$ is the set $\{1,2, \ldots, n\}$ with $i$ removed.
  Because $\chi(\Delta)=0$ and $v_1$, $v_2$ and $v_3$ are leaves of
  $K_\chi$, we have that 
\[
 \chi(S_{A_i}) = \chi(\Delta)-\chi(S_{i4}) =
  -\chi(S_{i4}) \neq 0
\]
for $i\in \{1,2,3\}$.  In particular, all
  of $J$ survives under $\chi$.  Next since $\{j,4\} \subset A_i$ so
  long as $i$ and $j$ are distinct elements in $\{1,2,3\}$, we have
  that $S_{A_i}$ commutes with $S_{j4}$ in these situations.  As a
  consequence, the graph $C(J)$ is connected by a hexagon of edges.
  Finally, $J$ dominates $I$ since every standard generator avoids one
  of the first three points and thus commutes with one of the elements
  $S_{A_i}$.  Lemma~\ref{lem:condom} completes the proof.
\end{proof}

\begin{lem}[Disjoint leaves]\label{lem:disjoint-leaves}
  Let $\chi$ be a character of $\pure_n$.  If $K_\chi$ contains two
  vertices of valence~$1$ and the unique edges that end at these
  vertices are disjoint, then \good.
\end{lem}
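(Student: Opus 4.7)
The plan is to apply Lemma~\ref{lem:condom}. Using Corollary~\ref{cor:auts}, I would first precompose $\chi$ with an automorphism of $\pure_n$ induced by a permutation in $S_n$ so that the two leaves become $v_1$ and $v_3$ with unique edges $e_{12}$ and $e_{34}$. The leaf hypothesis then reads $\chi(S_{1j}) = 0$ for every $j \neq 2$ and $\chi(S_{3j}) = 0$ for every $j \neq 4$, which is the source of all the zeros that drive the argument.

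The surviving set I propose is $J = \{S_{12}, S_{34}, S_{123}, S_{134}\}$. Combining the three zeros above with the edge-sum formula $\chi(S_{ijk}) = \chi(S_{ij}) + \chi(S_{ik}) + \chi(S_{jk})$ gives $\chi(S_{123}) = \chi(S_{12})$ and $\chi(S_{134}) = \chi(S_{34})$, so every element of $J$ survives. The commuting graph $C(J)$ contains the path $S_{123} - S_{12} - S_{34} - S_{134}$, using the containments $\{1,2\} \subset \{1,2,3\}$ and $\{3,4\} \subset \{1,3,4\}$ together with the fact that the boundary segments $p_1 p_2$ and $p_3 p_4$ of the standard convex-position basepoint are disjoint.

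For the generating set I would modify the standard one by removing $S_{24}$ and inserting $S_{124}$; since $S_{124} = S_{12} S_{14} S_{24}$, the removed generator is recoverable and $I$ still generates $\pure_n$. Dominance is then a finite check: $S_{13}$ and $S_{23}$ commute with $S_{123}$, $S_{14}$ commutes with $S_{134}$, $S_{124}$ commutes with $S_{12}$, and for any standard generator $S_{ij}$ with at least one endpoint outside $\{1,2,3,4\}$, the convex-position geometry forces the chord $p_i p_j$ to avoid at least one of the boundary edges $p_1 p_2$, $p_3 p_4$, giving commutation with $S_{12}$ or $S_{34}$. The only real obstacle is the generator $S_{24}$ itself: the natural triangles $S_{124}$ and $S_{234}$ that would let a $\{2,4\}$-containing element join $J$ need not survive under $\chi$ (if, for example, $\chi(S_{24}) = -\chi(S_{12})$ then $\chi(S_{124}) = 0$), so instead the $\{2,4\}$-handling element is moved into $I$, where survival is not required. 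With this single substitution in place, Lemma~\ref{lem:condom} yields $[\chi] \in \Sigma^1(\pure_n)$.
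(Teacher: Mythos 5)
Your proof is correct, and it is a genuine variation on the paper's argument rather than a reproduction of it. Both proofs normalize so that $v_1,v_3$ are the leaves with edges $e_{12},e_{34}$ and then apply Lemma~\ref{lem:condom}, but the paper takes
$J=\{S_{12},S_{34},S_{123},S_{A_1},S_{A_3}\}$ with $A_i = \{1,\ldots,n\}\setminus\{i\}$, which forces a preliminary case split: the complementary swings $S_{A_i}$ only survive because $\chi(S_{A_i}) = \chi(\Delta)-\chi(S_{i4})$, so the paper must first dispatch $\chi(\Delta)\neq 0$ via Lemma~\ref{lem:zero-sum} and then assume $\chi(\Delta)=0$. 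Your choice $J=\{S_{12},S_{34},S_{123},S_{134}\}$ survives unconditionally from the leaf vanishing alone, so the $\Delta$ split disappears; the cost is that $S_{24}$ commutes with nothing in your $J$, which you handle by the same generating-set swap the paper already uses in Lemma~\ref{lem:disjoint-pair} (remove $S_{24}$, insert $S_{124}=S_{12}S_{14}S_{24}$). Your dominance check is complete: $S_{13},S_{23}$ fall to $S_{123}$, $S_{14}$ to $S_{134}$, $S_{124}$ to $S_{12}$, $S_{12},S_{34}$ are in $J$, and any $S_{ij}$ with an endpoint outside $\{1,2,3,4\}$ must avoid one of $\{1,2\},\{3,4\}$ as index sets, and since $p_1p_2$ and $p_3p_4$ are sides of the convex $n$-gon no other chord can meet them except at a shared vertex. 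In short, you trade the paper's large-complement swings and $\Delta$-dichotomy for a small all-triangle $J$ plus a generator substitution; the outcome is an equally valid and arguably slightly tidier proof.
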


\begin{proof}
  If $\chi(\Delta)$ is not zero, then \good\ by
  Lemma~\ref{lem:zero-sum}.  Otherwise, permute the points so that
  $v_1$ and $v_3$ are leaves and $e_{12}$ and $e_{34}$ are in
  $K_\chi$.  Then let $I$ be the standard generators for this
  arrangement and let $J$ consist of the five elements $S_{12}$,
  $S_{34}$, $S_{123}$, $S_{A_1}$ and $S_{A_3}$ where $A_i$ is the set
  $\{1,2, \ldots, n\}$ with $i$ removed.  Because $\chi(\Delta)=0$ and
  $v_1$ and $v_3$ are leaves of $K_\chi$, we have that $\chi(S_{A_1})
  = -\chi(S_{12})$, $\chi(S_{A_3}) = -\chi(S_{34})$, and
  $\chi(S_{123}) = \chi(S_{12})$.  In particular, all of $J$ survives
  under $\chi$.  The graph $C(J)$ is connected since both $S_{A_3}$
  and $S_{123}$ commutes with $S_{12}$ which commutes with $S_{34}$
  which commutes with $S_{A_1}$.  Finally, the set $J$ dominates $I$
  because the only standard generator which does not commute with
  $S_{A_1}$ or $S_{A_3}$ is $S_{13}$ and it commutes with $S_{123}$.
  Lemma~\ref{lem:condom} completes the proof.
\end{proof}

\begin{lem}[Disjoint edges and one triangle]\label{lem:triangle}
  Let $\chi$ be a character of $\pure_n$.  If $K_\chi$ contains a pair
  of disjoint edges and three of these endpoints form a triangle whose
  $\chi$-value is not zero, then \good.
\end{lem}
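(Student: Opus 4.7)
The plan is to apply Lemma~\ref{lem:condom} with a five-element set $J$ of pairwise-linked surviving elements together with a mildly modified generating set. After permuting the indices I may assume that the two disjoint edges of $K_\chi$ are $e_{12}$ and $e_{34}$, and that the triangle in $K_\chi$ lies on $\{v_1,v_2,v_3\}$, so that $e_{12},e_{13},e_{23}$ all belong to $K_\chi$ and $\chi(S_{123})\neq 0$ by hypothesis. The candidate I would take is
\[
J=\{S_{12},S_{13},S_{23},S_{34},S_{123}\},
\]
whose first four elements survive because they appear as edges in $K_\chi$ and whose fifth element survives by the triangle assumption.

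I would next check that the commuting graph $C(J)$ is connected. The element $S_{123}$ commutes with each of $S_{12},S_{13},S_{23}$ by the containment criterion recorded in the Commuting swings remark, and $S_{12}$ commutes with $S_{34}$ because the chords $p_1p_2$ and $p_3p_4$ are disjoint in the convex configuration of Figure~\ref{fig:convex}, so $C(J)$ is a tree on five vertices.

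The bulk of the work is the domination check. Using the rule that two swing generators on disjoint index sets commute exactly when their chords do not cross (equivalently, when the four indices are not interleaved on the circle), a straightforward case-by-case analysis should show that every standard generator other than $S_{24}$ commutes with some member of $J$: the $S_{1k}$ with $k\ge 4$ pair with $S_{23}$; the $S_{2k}$ with $k\ge 5$ pair with $S_{34}$; the $S_{3k}$ and $S_{4k}$ with $k\ge 5$ and the $S_{k\ell}$ with $k,\ell\ge 5$ pair with $S_{12}$; and $S_{12},S_{13},S_{23},S_{34}$ already lie in $J$.

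The main obstacle is the single standard generator $S_{24}$: it shares an endpoint with each of $S_{12},S_{23},S_{34}$, its chord crosses that of $S_{13}$, and the chord $p_2p_4$ meets the triangular hull of $S_{123}$ at $p_2$, so $S_{24}$ commutes with no element of $J$. My remedy is to remove $S_{24}$ from the standard generating set and insert $S_{234}$ in its place. Since $S_{234}$ factors as an ordered product involving $S_{23},S_{24},S_{34}$, the missing element $S_{24}$ is recovered from $S_{234},S_{23},S_{34}$, and the modified set $I$ still generates $\pure_n$. The containment $\{2,3\}\subset\{2,3,4\}$ forces $S_{234}$ to commute with $S_{23}\in J$, so $J$ now dominates $I$, and Lemma~\ref{lem:condom} delivers $[\chi]\in\Sigma^1(\pure_n)$.
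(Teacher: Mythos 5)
The proposal misreads the hypothesis, and the resulting gap is fatal in the cases that matter. The lemma does \emph{not} assume that all three edges of the triangle lie in $K_\chi$; it only assumes that the sum $\chi(S_{ijk})=\chi(S_{ij})+\chi(S_{ik})+\chi(S_{jk})$ is nonzero. You write ``so that $e_{12},e_{13},e_{23}$ all belong to $K_\chi$,'' but that is a strictly stronger condition than stated: one or both of $S_{13},S_{23}$ may be killed by $\chi$ even though the triple sum is nonzero. This weaker reading is exactly what is needed in the proof of Theorem~\ref{thm:invariant}, where the lemma is applied to the $4$-cycle graph in Figure~\ref{fig:medium}, a graph containing no triangle as a subgraph.

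Under the correct hypothesis your set $J=\{S_{12},S_{13},S_{23},S_{34},S_{123}\}$ can contain elements that die under $\chi$, in which case Lemma~\ref{lem:condom} cannot be invoked: its hypothesis requires every member of $J$ to survive, because each must act as a hyperbolic isometry of the associated $\R$-tree. Dropping the possibly-dead $S_{13}$ from $J$ is harmless (it is dominated by $S_{123}$), but dropping $S_{23}$ breaks your domination argument at exactly one spot: in your modified generating set $I$, the element $S_{14}$ commutes with no remaining member of $J$. It shares an endpoint with $S_{12}$, $S_{34}$, and $S_{123}$ (with $\{1,4\}\not\subset\{1,2,3\}$), and the only survivor you had offered for it was $S_{23}$. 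The paper sidesteps this by taking the smaller set $J=\{S_{12},S_{123},S_{34}\}$, whose members are guaranteed to survive by the hypothesis alone, and by modifying $I$ more aggressively, removing both $S_{14}$ and $S_{24}$ and inserting $S_{134}$ and $S_{234}$, each of which commutes with $S_{34}$. To repair your argument you would have to make essentially the same move, i.e., also remove $S_{14}$ and replace it by $S_{134}$.
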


\begin{proof}
  Permute the points so that $e_{12}$ and $e_{34}$ are edges in
  $K_\chi$ and $\chi(S_{123})$ is not zero.  Then let $J = \{S_{12},
  S_{123}, S_{34}\}$ and let $I$ be the standard generating set for
  this arrangement with $S_{14}$ and $S_{24}$ removed and $S_{134}$
  and $S_{234}$ added in their place.  The set $I$ still generates
  $\pure_n$ since $S_{134} = S_{13} S_{14} S_{34}$ and $S_{234} =
  S_{23} S_{24} S_{34}$ so $S_{14}$ and $S_{24}$ can be recovered from
  the ones that remain.  The graph $C(J)$ is connected since $S_{12}$
  commutes with both $S_{123}$ and $S_{34}$.  Every standard generator
  with an endpoint outside the set $\{1,2,3,4\}$ commutes with either
  $S_{12}$ or $S_{34}$.  Of the six standard generators with both
  endpoints in this set, two are not in $I$, three commute with
  $S_{123}$, and $S_{34}$ commutes with itself.  Finally, the added
  elements $S_{134}$ and $S_{234}$ both commute with $S_{34}$ so $J$
  dominates $I$.  Lemma~\ref{lem:condom} completes the proof.
\end{proof}

\begin{figure}
  \begin{tikzpicture}[scale=1.5]
    \draw[-] (1,0)--(0,1);
    \draw[-] (0,0)--(1,0)--(1,1);
    \draw[-] (2,1)--(2,0);
    \draw[-] (3,0)--(3,1);
    \draw[-] (4,1)--(4,0)--(5,0)--(5,1);
    \foreach \x in {0,1,2,3,4,5}
    \foreach \y in {0,1} {\fill (\x,\y) circle (.6mm);}
  \end{tikzpicture}
  \caption{Three subgraphs of $K_4$.\label{fig:easy}}
\end{figure}
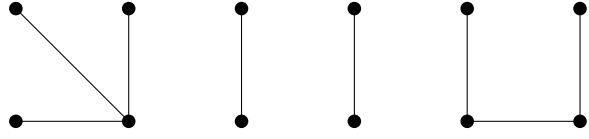

\begin{figure}
  \begin{tikzpicture}[scale=1.5]
    \draw[-] (1,0)--(0,1)--(0,0)--(1,0)--(1,1);
    \draw[-] (3,0)--(2,1)--(2,0)--(3,0)--(3,1)--(2,1);
    \draw[-] (5,1)--(4,1)--(4,0)--(5,0)--(5,1);
    \draw[-] (7,0)--(6,1)--(6,0)--(7,0)--(7,1)--(6,1);
    \draw[-] (6,0)--(7,1);
    \foreach \x in {0,1,2,3,4,5,6,7}
    \foreach \y in {0,1} {\fill (\x,\y) circle (.6mm);}
  \end{tikzpicture}
  \caption{Four subgraphs of $K_4$.\label{fig:medium}}
\end{figure}
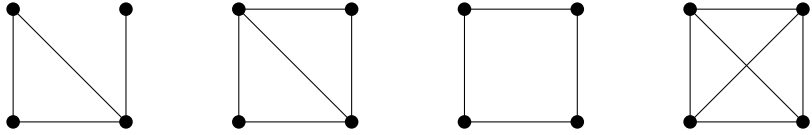

These lemmas combine to prove the following.

\begin{thm}[Characters in the invariant]\label{thm:invariant}
  Let $\chi$ be a character of $\pure_n$. If $[\chi]$ is not part of a
  $\pure_3$-circle or a $\pure_4$-circle then \good.
\end{thm}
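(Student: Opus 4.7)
The plan is to combine the preceding reduction lemmas with Lemma~\ref{lem:star-small}, and then do a case analysis on the shape of $K_\chi$, using the hypothesis that $\chi$ is not on a $\pure_3$- or $\pure_4$-circle to rule out the residual bad configurations. First, if $\chi(\Delta) \neq 0$, Lemma~\ref{lem:zero-sum} finishes; so assume $\chi(\Delta) = 0$. Next, if $K_\chi$ contains an edge disjoint from two other edges, Lemma~\ref{lem:disjoint-pair} finishes; so assume not. Lemma~\ref{lem:star-small} then says $K_\chi$, with its isolated vertices removed, is either a star or a subgraph of $K_4$.

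In the star case, Lemma~\ref{lem:star} dispatches stars with at least three edges. A star with exactly two edges $e_{ij}, e_{ik}$ has all surviving generators supported on $\{i,j,k\}$, so $\chi$ factors through $\phi_{ijk}$; the condition $\chi(\Delta) = 0$ reduces to $\chi(S_{ij}) + \chi(S_{ik}) + \chi(S_{jk}) = 0$, which places $\chi$ on a $\pure_3$-circle and is excluded by hypothesis. A star with only one edge $e_{ij}$ would give $\chi(\Delta) = \chi(S_{ij}) \neq 0$, contradicting our reduction.

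For subgraphs of $K_4$ on vertices $\{1,2,3,4\}$, if $K_\chi$ has two vertices of valence one whose unique incident edges are disjoint (this covers two disjoint edges and the path $P_4$), Lemma~\ref{lem:disjoint-leaves} applies. The remaining possibilities are a bare triangle, a triangle with a pendant edge, the $4$-cycle $C_4$, $K_4 \setminus e$, and $K_4$ itself. A lone triangle on $\{i,j,k\}$ has $\chi(\Delta) = \chi(S_{ijk}) = 0$, and so places $\chi$ on a $\pure_3$-circle via $\phi_{ijk}$. In each of the other four graphs, $K_\chi$ contains a pair of disjoint edges; if any of the four triangle sums $\chi(S_{ijk})$ with $\{i,j,k\} \subset \{1,2,3,4\}$ is nonzero, Lemma~\ref{lem:triangle} applies. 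Otherwise all four triangle sums vanish, and Lemma~\ref{lem:triple-sums} supplies the three opposite-edge equalities together with $\chi(S_{1234}) = 0$. For triangle-with-pendant and $K_4\setminus e$, these equalities force an edge actually present in $K_\chi$ to have $\chi$-value zero, a contradiction, so some triangle sum is in fact nonzero and Lemma~\ref{lem:triangle} applies after all. For $C_4$ and $K_4$, the character $\chi$ factors through $\phi_{1234}$, all three opposite-edge equalities hold, and the identity $\chi(S_{1234}) = 2(\chi(S_{12}) + \chi(S_{13}) + \chi(S_{14}))$ combined with $\chi(S_{1234}) = 0$ produces the final sum condition, placing $\chi$ on a $\pure_4$-circle, again excluded.

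The main obstacle is this last subcase, where one must observe that once Lemma~\ref{lem:triple-sums} delivers the three opposite-edge equalities, $\chi(S_{1234}) = 0$ is precisely the triple-sum condition defining a $\pure_4$-circle; after that recognition, the rest of the argument is a systematic enumeration of the graph types permitted by Lemma~\ref{lem:star-small} together with repeated application of the special-case lemmas already proved in this section.
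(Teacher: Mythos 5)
Your proof is correct and follows essentially the same path as the paper's: reduce via Lemma~\ref{lem:zero-sum}, Lemma~\ref{lem:disjoint-pair}, and the Star-or-Small Lemma to stars and $4$-vertex subgraphs of $K_4$, dispatch the easy shapes with Lemma~\ref{lem:star} and Lemma~\ref{lem:disjoint-leaves}, and then use Lemma~\ref{lem:triangle} together with the triple-sums computation to show the surviving cases land exactly on $\pure_3$- or $\pure_4$-circles. The extra bit of explicit algebra in the $C_4$ and $K_4$ subcase is a correct unwinding of what Lemma~\ref{lem:triple-sums} and the $\pure_4$-circle definition already give.
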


\begin{proof}
  Let $\Gamma$ be the graph $K_\chi$ with isolated vertices removed.
  By Lemma~\ref{lem:disjoint-pair}, Lemma~\ref{lem:star}, and
  Lemma~\ref{lem:star-small}, $[\chi]$ is in $\Sigma^1(\pure_n)$
  unless $\Gamma$ has at most $4$ vertices. By
  Lemma~\ref{lem:zero-sum}, \good\ unless the sum of the edge weights
  is zero.  So assume that $\Gamma$ has at most $4$ vertices, none of
  them isolated and the sum of the edge weights is zero.  There are no
  $2$ vertex graphs satisfying these conditions and the only $3$
  vertex graphs remaining are those which represent characters in
  $\pure_3$-circles.  Thus we may also assume that $\Gamma$ has
  exactly $4$ vertices.  Up to isomorphism there are precisely seven
  such graphs and they are shown in Figures~\ref{fig:easy}
  and~\ref{fig:medium}.  If $\Gamma$ is isomorphism to one of three
  graphs in Figure~\ref{fig:easy}, then \good\ by Lemma~\ref{lem:star}
  or Lemma~\ref{lem:disjoint-leaves}.  Finally, the four graphs in
  Figure~\ref{fig:medium} all have disjoint edges.  If any triple of
  vertices have edges whose $\chi$-values have a non-zero sum, then
  \good\ by Lemma~\ref{lem:triangle}.  The only remaining case is
  where all such triples sum to zero.  By Lemma~\ref{lem:triple-sums}
  this means that disjoint edges are assigned equal values.  This
  rules out the two graphs on the left of Figure~\ref{fig:medium} and
  reduces the other two to graphs representing characters in
  $\pure_4$-circles.  And this completes the proof.
\end{proof}

Theorem~\ref{thm:complement} and Theorem~\ref{thm:invariant}
prove Theorem~\ref{thm:main}.  

\bibliography{refs-bns} 
\bibliographystyle{amsalpha}
\end{document}